\newtheorem{lemma}{Lemma}
\newtheorem{theorem}{Theorem}
\newtheorem{remark}{Remark}
\newtheorem{corollary}{Corollary}
\newcommand{\rd}{{\rm d}}
\newcommand{\cA}{{\cal A}}
\newcommand{\cC}{{\cal C}}
\newcommand{\cD}{{\cal D}}
\newcommand{\cF}{{\cal F}}
\newcommand{\cG}{{\cal G}}
\newcommand{\cK}{{\cal K}}
\newcommand{\cR}{{\cal R}}
\newcommand{\bE}{\mathbb E}
\newcommand{\bL}{{\mathbb L}}
\newcommand{\bP}{{\mathbb P}}
\newcommand{\bR}{{\mathbb R}}
\newcommand{\sF}{{\mathscr F}}
\newcommand{\sT}{{\mathscr T}}
\newcommand{\sX}{{\mathscr X}}
\begin{document}
\numberwithin{equation}{section}

\title{Optimal single threshold stopping rules and sharp prophet
inequalities
}
\author{
A. Goldenshluger\thanks{Department of Statistics, University of Haifa, Haifa 31905, Israel.  e-mail:
{\em goldensh@stat.haifa.ac.il}. }
\and Y. Malinovsky\thanks{Department of Mathematics and Statistics, University of Maryland,
Baltimore County, Baltimore, MD 21250, USA.
e-mail: {\em yaakovm@umbc.edu}}
\and
A. Zeevi\thanks{Graduate School of Business, Columbia University, New York, NY 10027, USA.
e-mail: {\em assaf@gsb.columbia.edu}.}
}
\maketitle
\begin{abstract}
This paper considers a finite horizon optimal stopping problem for a sequence of independent and identically distributed random variables, where the objective is to design stopping rules that
attempt to select the random variable with the highest value in the sequence. The performance of any stopping rule may be benchmarked relative to the selection of a ``prophet"  that has perfect foreknowledge of the largest value.
Such comparisons are typically stated
in the form of ``prophet inequalities."
In this paper we develop
a game--theoretic characterization that supports a principled approach for deriving  sharp {\it non-asymptotic} prophet
inequalities for  single threshold stopping rules.
We demonstrate that sharp constants in the
ratio- and difference-type prophet inequalities
are determined by  the optimal values of
infinite two-person zero-sum game on the unit square
with particular  payoff kernels, while the the solutions to the game 
provide optimal stopping rules and  least favorable distributions.
Among other things, this formulation  also allows a systematic way to tackle  restricted classes of distributions.   The proposed framework leads to a numerically efficient algorithmic paradigm that 
allows 
computing  sharp constants in prophet inequalities with any prescribed level of accuracy. 
\end{abstract}
\vspace*{1em}
\noindent {\bf Keywords:} prophet inequalities, optimal stopping, single threshold stopping rules, iid random variables,  two--person zero--sum infinite  game,
optimization problems.
\par
\vspace*{1em}
\noindent {\bf 2000 AMS Subject Classification:} 60G40, 62L12, 91A05

\section{Introduction}
Optimal stopping problems have a long and storied academic history and have recently found various applications also in modern practical domains that arise in technological platforms; see further discussion and references below. At the core, the problem can be stated as a sequential selection objective: given a horizon of length $n$, one observes sequentially random variables and needs to stop them at some (random) time perceived to be associated with the largest value in the sequence.  The objective is typically to both identify such optimal stopping rules, but more practically to consider rules that are perhaps suboptimal but simple in structure, broad in scope of application, and enjoy theoretical performance guarantees.  Let us now formalize this set up.

\paragraph{Optimal stopping.} Let $X_1, \ldots, X_n$ be integrable independent  non--negative random variables
defined on the probability space $(\Omega, \sF, \bP)$
with joint distribution function $F^{(n)}:=\prod_{t=1}^nF_t$. 
Let  $\sX_t$ be the $\sigma$--field generated by $X_1, \ldots, X_t$,
$\sX_t=\sigma(X_1, \ldots, X_t)$, $1\leq t\leq n$, and let
$\sX:=\{\sX_t, 1\leq t\leq n\}$ be the corresponding filtration. By definition,
{\em a stopping time}
$\tau$ with respect to (wrt)  $\sX$ is a random
variable on $(\Omega, \sF, \bP)$ such that $\bP\{\tau \in \{1, \ldots, n\}\}=1$ and 
$\{\tau=t\} \in \sX_t$ for all $1\leq t\leq n$.
The set of all stopping times wrt filtration $\sX$ is denoted $\sT_{\rm all}$.
\par
The {\it reward} of  a given stopping time $\tau\in \sT_{\rm all}$
is defined by
\[
 V_n(\tau; F^{(n)}):= \bE X_{\tau}.
\]
The problem of optimal stopping is
to find a stopping rule $\tau^*$ such that
\begin{equation*}
 V_n(\tau^*; F^{(n)}) =
 V_n^*(\sT_{\rm all}; F^{(n)}) := \sup_{\tau \in \sT_{\rm all}} \bE X_{\tau}.
\end{equation*}
It is well known \cite{ChowRobSieg} that the optimal stopping rule $\tau^*$ is given by
\[
 \tau^* := \min\{1\leq t \leq n-1: X_t  \geq  v_{n-t}\},
\]
and $\tau^*=n$ otherwise.
 Here
the sequence of thresholds $\{v_t\}$ is defined by backward induction
\[
v_1=\bE X_n, \;\;\;v_{t+1}=\bE \{X_{n-t}\vee v_t\}, \;\;\;t=1,\ldots,n-1,
\]
and the optimal value of the problem 
is
$V_n^*(\sT_{\rm all}; F^{(n)})=v_n$.

\paragraph*{Prophet inequalities and minimax formulation.} In any specific problem instance
when $F^{(n)}$ is given,
the optimal stopping rule
$\tau^*$ and  the optimal value $V_n^*(\sT_{\rm all}; F^{(n)})$
can be computed numerically.
However,
in general,
it is difficult to assess  the performance of optimal stopping rules,
and {\em prophet inequalities} are very useful tools for this purpose.
Prophet inequalities compare  the optimal value
 of the stopping problem
 with the expected value of the maximal observation
 \[
M_n(F^{(n)}) := \bE \max_{1\leq t\leq n} X_t.
 \]
 The latter
 is the performance of the ``prophet" with complete foresight. There are two commonly used measures
 of stopping rule performance relative to the prophet: {\em the ratio--type} and {\em difference--type
 prophet inequalities}.
\par
Let  $\cF^{(n)}$ be a family of distribution functions on $[0,\infty)\times\cdots\times[0,\infty)$, and
let
$\sT$ be a class of stopping rules of $\sX$.
 The {\it competitive ratio}
of a stopping rule $\tau\in \sT$ under distribution $F^{(n)}\in \cF^{(n)}$ is
defined by
\[
 \cR_n(\tau; F^{(n)}) =  \frac{V_n(\tau;F^{(n)})}{M_n(F^{(n)})}.
\]
The competitive  ratio is well defined unless
$X_1, \ldots, X_n$ are all identically zero;
this trivial case is excluded from  consideration.
The  optimal stopping rule $\tau^*$ in class $\sT$, $\tau^*\in \sT$, for given $F^{(n)}$ satisfies
\[
 \cR_n(\tau^*; F^{(n)})=\cR_n(\sT; F^{(n)}):=\sup_{\tau\in \sT}\cR_n(\tau; F^{(n)}).
\]
A {\it ratio--type prophet inequality} associated with class of stopping rules
$\sT$ and family of distributions
$\cF^{(n)}$ is a  lower bound on
$\cR_n(\sT; F^{(n)})$ uniform over $F^{(n)}\in \cF^{(n)}$:
\begin{equation}\label{eq:gen-prophet}
 \cR_n(\sT; F^{(n)}) \geq \psi_n,\;\;\; \mbox{{\rm for all} } F^{(n)}\in \cF^{(n)},
\end{equation}
where $\{\psi_n\}$ is a numerical sequence with values in $(0, 1]$.
The {\it worst--case competitive ratio} of the optimal rule in $\sT$ over the family of distributions $\cF^{(n)}$
is
\[
\cR_n^*(\sT;\cF^{(n)}) = \inf_{F^{(n)}\in \cF^{(n)}}  \cR_n(\sT;F^{(n)})=
\inf_{F^{(n)}\in \cF^{(n)}} \sup_{\tau\in \sT}\cR_n(\tau; F^{(n)}).
\]
We say that
a ratio--type prophet inequality \eqref{eq:gen-prophet} is
 {\em asymptotically sharp} if
\begin{equation}\label{eq:gen-prophet-1}
 \lim_{n\to\infty} \big\{\psi_n^{-1} \cR_n^*(\sT; \cF^{(n)})\big\} = 1.
\end{equation}
\par
Asymptotically sharp {\it difference--type prophet inequalities}  are defined similarly. First,
consider the {\em regret} of a stopping rule $\tau\in \sT$ under $F^{(n)}\in \cF^{(n)}$
to be the difference between the prophet performance and the reward of $\tau$,
\[
 \cA_n(\tau; F^{(n)}):=M_n(F^{(n)})- V_n(\tau; F^{(n)}),
\]
and the regret of the optimal stopping rule for given $F^{(n)}$ is
\[
 \cA_n(\sT; F^{(n)}):=\inf_{\tau\in \sT}\cA_n(\tau;\cF^{(n)})=\inf_{\tau\in \sT}
 \{M_n(F^{(n)})- V_n(\tau; F^{(n)})\}.
\]
The {\it difference--type prophet inequality}
associated with class of stopping rules $\sT$ and family of probability distributions $\cF^{(n)}$ is
an upper bound
on the regret of the optimal stopping rule which holds uniformly over  $F^{(n)}\in \cF^{(n)}$ :
\begin{equation}\label{eq:gen-additive}
 \cA_n(\sT; F^{(n)}) \leq \delta_n,\;\;\;\mbox{{\rm for all }} F^{(n)}\in \cF^{(n)},
\end{equation}
where $\{\delta_n\}$ is a non--negative numerical sequence.
The worst--case regret over a family $\cF^{(n)}$ of distributions~is
\[
 \cA_n^*(\sT;\cF^{(n)}):= \sup_{F^{(n)}\in \cF^{(n)}} \cA_n(\sT; F^{(n)}),
\]
and the difference--type prophet inequality is said to be   {\it asymptotically sharp}
if
\begin{equation}\label{eq:gen-additive-1}
 \lim_{n\to\infty} \{\delta_n^{-1} \cA_n^*(\sT; \cF^{(n)})\} =1.
\end{equation}
\par
There is a great deal of interest in the derivation of asymptotically sharp prophet inequalities
and  determination of  sequences $\{\psi_n\}$ and $\{\delta_n\}$
for various families  of distributions and classes of stopping rules.
Recently there
has been a renewed interest in the topic in view of applications of optimal stopping theory
in economics and computer science. For comprehensive review  of the area  and for
additional pointers to the literature we refer to the book by
\citeasnoun{Prophet-German} and to the
surveys by \citeasnoun{Hill-Kerz},
and \citeasnoun{Correa18}.
Below we discuss selected results that are most relevant to our study.

 \paragraph*{Prophet inequalities for optimal stopping rules.}
 The classical {\em ratio--type prophet inequality} \cite{Krengel} states that
for any joint probability distribution $F^{(n)}=\prod_{t=1}^n F_t$  one has
\begin{equation}\label{eq:prophet}
 \cR_n\big(\sT_{\rm all}; F^{(n)}\big) \geq \;\frac{1}{2}.
\end{equation}
Let $\cF^{(n)}_{[0,\infty)}$ be the family of all possible
product form distributions with marginals supported on $[0,\infty)$.
Then
the prophet
inequality (\ref{eq:prophet}) is asymptotically sharp over $\cF_{[0,\infty)}^{(n)}$:
\begin{equation}\label{eq:prophet-1}
\lim_{n\to\infty}
\cR^*_n \big(\sT_{\rm all}; \cF_{[0,\infty)}^{(n)}\big)= \;\frac{1}{2}.
\end{equation}
In words, for any $F^{(n)}$ the optimal reward of stopping rules in the class $\sT_{\rm all}$ is
at least
within factor $1/2$ of the
value that can be achieved by the prophet with
complete foresight, and there exists a distribution $F_*^{(n)}$  such
that this factor cannot be improved upon asymptotically as $n$ tends to infinity.
We refer to \citeasnoun{Krengel}; see also the survey \citeasnoun{Hill-Kerz}
for  detailed discussion.
\par
As for the difference--type prophet inequalities, \citeasnoun{Hill-Kerz81}
considered the family $\cF^{(n)}_{[0,1]}$ of all product form distributions with
marginals supported on $[0,1]$ and showed that
\begin{equation}\label{eq:1/4}
 \cA_n(\sT_{\rm all}; F^{(n)}) \leq \frac{1}{4},\;\;\;\forall F^{(n)}\in \cF_{[0,1]}^{(n)}.
\end{equation}
This inequality is asymptotically sharp, i.e.,
\[
\lim_{n\to\infty} \cA^*_n\big(\sT_{\rm all}; \cF_{[0,1]}^{(n)}\big) =\frac{1}{4}.
\]
The choice of   the class $\cF_{[0,1]}^{(n)}$ was motivated by
\citeasnoun{Hill-Kerz81},
given  that if the support is unbounded then the regret can be arbitrarily large.
A case with unbounded support was in fact considered by
\citeasnoun{Kennedy-Kertz}, who focused on the  family $\cF_\sigma^{(n)}$
of all distributions $F^{(n)}=\prod_{t=1}^n F_t$
with marginals having bounded variance, ${\rm var}(X_i)\leq \sigma^2<\infty$,
$i=1, \ldots, n$. They proved that
\[
 \cA_n(\sT_{\rm all}; F^{(n)}) \leq c_n \sigma \sqrt{n-1},\;\;\;\forall
 F^{(n)}\in \cF^{(n)}_\sigma,
\]
where $c_n\leq 1/2$, $\liminf_n c_n\geq \sqrt{\ln 2- 1/2}\approx 0.439$.
However, no statement on sharpness of this inequality is made. To the best of our knowledge,
the problem of deriving such results in  more general (than bounded support) settings remains open. 
\par
If $X_1, \ldots, X_n$ is a sequence of independent identically distributed
(iid) random variables
with common distribution $F$, then the  prophet inequalities (\ref{eq:prophet}) and (\ref{eq:1/4}) can be improved.
It is worth noting however that proofs of sharpness in this case are more involved
because the family of distributions is much narrower given the homogeneity assumption.
In what follows,  in the setting of iid
random variables,
with slight abuse of notation
in all formulas we drop the superscript $(n)$ and replace  $F^{(n)}$ by $F$, $\cF^{(n)}$ by~$\cF$,~etc.
\par
In this setting, \citeasnoun{Hill-Kerz-1} show that
\[
\cR_n(\sT_{\rm all}; F)
 \geq \;\frac{1}{a_n}, \;\;\;\forall  F\in \cF_{[0,\infty)}
\]
with numerical constants $\{a_n\}$ satisfying $1.1 <a_n<1.6$.
\citeasnoun{Kerz} strengthened this result by 
providing a sharp characterization of the region where pairs
$\{M_n(F), V_n^*(\sT_{\rm all}; F)\}$  may take  values.
In particular, it follows from the results in this paper that 
\begin{equation*}
  \lim_{n\to \infty} \inf_{F\in \cF_{[0,\infty)}} \cR_n(\sT_{\rm all}; F) 
  \geq 
\frac{1}{1+\alpha_*}\approx 0.746,
\end{equation*}
where infimum  is taken over all possible distributions, and $\alpha_*$ is the unique solution to the equation
$\int_0^1 (y-y\ln y +\alpha)^{-1}\rd y=1$.
We also refer  to \citeasnoun{Prophet-German} and
\citeasnoun{CorreaPPM}.
\par
In the case of iid random variables the difference--type prophet inequality (\ref{eq:1/4}) can be
improved as well:
\citeasnoun{Hill-Kerz-1} show that for any $F$ supported on $[0,1]$
and
for some universal constants
$0<b_n<\frac{1}{4}$ with $b_2\approx 0.0625$, $b_{100}\approx 0.110$ and $b_{10,000}\approx 0.111$
one has
\[
\cA_n(\sT_{\rm all}; F) \leq b_n,\;\;\;\forall F\in \cF_{[0,1]},
\]
and this inequality is asymptotically sharp. In particular, in Proposition~5.3,
\citeasnoun{Hill-Kerz-1} present extremal
distributions for which the claimed sequence $\{b_n\}$ is attained.
\paragraph*{Prophet inequalities for single threshold stopping rules.}
One of the remarkable observations in this area is that  an inequality such as (\ref{eq:prophet}) holds even if the class
of all possible stopping rules $\sT_{\rm all}$ is replaced by a much smaller class of
simple stopping rules -- the stopping rules with a single threshold.
Specifically, let
\[ 
\sT_0:=\{\tau_0(\theta), \theta\geq 0\},\;\;\;
\sT_1:=\{\tau_1(\theta), \theta\geq 0\}
\]
 be the classes of single threshold stopping rules such that
\begin{equation}\label{eq:tau-theta}
 \tau_0(\theta) := \min\{1\leq t\leq n-1: X_t> \theta\},
 \;\;\;\;\;\;\;\;
 \tau_1(\theta) := \min\{1\leq t\leq n-1: X_t\geq \theta\},
\end{equation}
and $\tau_i(\theta)=n$, $i=0,1$  when  the
set in the parentheses is empty.
The classes $\sT_0$ and $\sT_1$ are equivalent in terms of performance of the corresponding stopping rules; taking this fact into account
in the sequel we use notation $\sT_{\rm s}$ for
$\sT_0$ or $\sT_1$.
However, sometimes it will be  convenient to distinguish between
classes $\sT_0$ and~$\sT_1$.
\par
The main observation for the single threshold stopping rules $\sT_{\rm s}$
dates back to the paper by 
\citeasnoun{Samuel-Cahn} who shows that relations (\ref{eq:prophet})--(\ref{eq:prophet-1}) remain true if
$\sT_{\rm all}$ is replaced by $\sT_{\rm s}$:
\begin{align}
 & \cR_n(\sT_{\rm s}; F^{(n)})
 \geq \;\frac{1}{2},\;\;\;\forall F^{(n)}\in \cF^{(n)}_{[0,\infty)}
 \label{eq:prophet-threshold}
 \end{align}
 and
 \begin{align*}
 & \lim_{n\to\infty}
 \cR_n^*\big(\sT_{\rm s}; \cF_{[0,\infty)}^{(n)}\big)\;
 = \;\frac{1}{2}.
\end{align*}
It is evident that  the lower bound (\ref{eq:prophet-threshold}) also holds  in
the iid case. In addition,
\citeasnoun{Samuel-Cahn} in Theorem~2 proves that this bound is asymptotically sharp
for the single threshold rules even in the iid case:
\begin{equation}\label{eq:Ester-0}
 \lim_{n\to\infty}  \cR^*_n \big(\sT_{\rm s}; \cF_{[0,\infty)}\big)
 = \;\frac{1}{2}.
\end{equation}
It turns out that
the constant $1/2$ can be improved for continuous distributions.
In particular, let
\[
 \cC_{[0,\infty)}:= \big\{F: \;\;F \hbox{  is continuous on  } [0,\infty)\big\};
\]
then
it follows from results in \citeasnoun{CorreaPPM}
that
\[
 \lim_{n\to \infty}\cR_n^*\big(\sT_{\rm s}; \cC_{[0,\infty)} \big) \geq 1- \frac{1}{e}.
\]
We refer to
\citeasnoun{Ehsani} for the analysis of the prophet inequality in the iid setting with
constant $1-e^{-1}$, but for randomized single threshold
stopping rules. 
\par
We are not aware of any analogous results for difference--type prophet inequalities that establish   optimality of 
{\it single threshold}  stopping rules.  A more detailed discussion of sharpness 
of prophet inequalities in the iid setting for single threshold 
stopping rules is given below in Section~\ref{sec:sharpness}.

\paragraph*{Main contributions.}

The literature on prophet inequalities is 
 vast and the topic has been extensively studied. 
Sharp prophet inequalities presented in the literature, are typically 
{\em asymptotically sharp} in the sense of definitions
\eqref{eq:gen-prophet-1} and \eqref{eq:gen-additive-1}, and they are derived 
for classes of all possible distributions $\cF_{[0,\infty)}$
or $\cF_{[0,1]}$. The process usually follows two main steps:  
a stopping rule in a given class is proposed
and  a lower (upper) bound on the competitive ratio (regret) of the rule is derived for any
distribution from the given class; then, a least favorable distribution is exhibited for which 
the derived bounds are (asymptotically) achieved.
In general, very little is known about
exact values of the worst--case competitive ratio and the worst--case regret
in the non--asymptotic setting when
the horizon~$n$
is fixed, or when one considers additional constraints on the
class  of underlying distributions. 
\par
In this work we focus on the iid setting and
develop a unified and principled framework for
the derivation of non--asymptotic sharp prophet inequalities for
single threshold stopping rules.
We study randomized single threshold stopping rules that aggregate stopping rules 
$\tau_0(\theta)$ and
$\tau_1(\theta)$ in \eqref{eq:tau-theta}
and
allow one to
achieve much better performance relative to the prophet.
We  show that
for such rules  the derivation of sharp 
prophet inequalities is equivalent to solving a two--person zero--sum infinite game on the unit square.
Then for any fixed problem horizon~$n$
the optimal value of the aforementioned game yields
 the sharp constant in the non--asymptotic prophet inequality, while
the solution of the game provides 
both the least favorable distribution as well as  the corresponding 
optimal single threshold
stopping rule.
The developed framework supports
simple computational procedures
to derive
sharp non--asymptotic prophet
inequalities for restricted classes of distributions,
characterizes numerical (discretization) errors in said computation, and illustrates
efficacy on an array of test problems.

\paragraph*{Notation.}
Throughout the paper we use the following notation.
Let $F$ be a distribution function; then
we define the quantile function
$F^{\leftarrow}: [0,1]\to \bar{\bR}=[0, \infty]$
of $F$ by
\[
  F^\leftarrow (t):=\inf\{x\in [0,\infty): F(x)\geq t\},\;\;
  0 \leq t \leq 1.
 \]
The quantile function $F^\leftarrow$
is
 left continuous inverse to $F$.
 We also denote $U(t)$, $t\geq 1$, the $(1-1/t)$--quantile of $F$:
 \begin{equation}\label{eq:U}
 U(t):= \bigg(\frac{1}{1-F}\bigg)^{\leftarrow}(t) = F^\leftarrow(1-\tfrac{1}{t})=\inf\{x\in [0,\infty):
 F(x)\geq 1-\tfrac{1}{t}\},\;\;\;t\geq 1.
\end{equation}
In what follows
the range of a distribution function is denoted
${\rm range}(F):=\{F(x): x\in [0, \infty)\}$.
A  {\em realizable $t$--quantile of $F$} is any value  $z\in [0,\infty)$ such that $F(z)=t$.
\paragraph*{Organization.} The rest of this paper is structured as follows.  In Section~\ref{s-single} we provide the  results concerning performance of single threshold rules in the iid setting  and related prophet inequalities.
Section~\ref{sec:sharpness} discusses various 
statements  about sharpness of prophet inequalities for single
threshold rules that have been
made in antecedent literature.
The main results of this paper are presented in Sections~\ref{s-game}--\ref{s-num}.
Section~\ref{s-game} develops  a game--theoretic formulation that characterizes 
sharp prophet inequalities and corresponding  least favorable distributions and
optimal single threshold stopping rules. In Section \ref{s-prophet} we detail
how this game--theoretic characterization can be leveraged towards
theoretical guarantees, and discuss
the computational aspects related to the underlying optimization problems.
Section~\ref{s-num} contains discussion and examples that illustrate application of the developed
approach for restricted families of probability distributions. Section~\ref{sec:proofs} presents proofs of the
main results of this paper.

\section{Single threshold stopping rules}\label{s-single}

We slightly extend the definition of 
the single threshold stopping rules in \eqref{eq:tau-theta}
to allow randomization. Let $\{\xi_t, 1\leq t\leq n\}$ be
iid Bernoulli random variables with success probability $p\in [0,1]$, independent of $\sX$.
Define
\begin{align}\label{eq:stop-random}
 \tau_p(\theta):= \min\big\{1\leq t\leq n-1: (X_t>\theta)\cup (X_t=\theta,\xi_t=1)\big\},
\end{align}
and $\tau_p(\theta)=n$ if the set in the parentheses is empty. In words,
$\tau_p(\theta)$ stops at the first time~$t$ when the observed value $X_t$ exceeds the
fixed threshold $\theta$ or if   $X_t=\theta$ and the outcome $\xi_t$ of the independent
Bernoulli trial with success probability $p$ equals one. Let
\[ 
\sT_{\rm s, r}:=\{\tau_p(\theta): \theta\geq 0,\;p\in [0,1]\} 
\]
be the set of all such stopping rules.
The stopping rule $\tau_p(\theta)$ is determined by
two parameters: the threshold $\theta$ and the success probability~$p$.
Note that $\sT_0$ and $\sT_1$ are subsets of $\sT_{\rm s,r}$ corresponding to
$p=0$ and $p=1$ respectively.
\par
The randomization in the construction of $\tau_p(\theta)$ admits
very simple interpretation in terms of the stopping rules $\tau_0(\theta)$
and $\tau_1(\theta)$ defined in \eqref{eq:tau-theta}.
If, for a given threshold~$\theta$, both rules 
$\tau_0(\theta)$ and $\tau_1(\theta)$ prescribe to stop
at the same time instance $t=1, \ldots, n-1$ then 
$\tau_p(\theta)=t$ as well. But if $\tau_1(\theta)=t$ and 
$\tau_0(\theta)>\tau_1(\theta)$ then the decision to stop or not to stop at $t$ is made according to the outcome of the independent Bernoulli experiment with success probability~$p$. 


We begin with the result that establishes
the exact formula for the reward of the 
optimal single threshold
stopping rule
from $\sT_{\rm s, r}$.

\begin{theorem}\label{th:prophet}
{\rm \textbf{(Performance of optimal single threshold rules)}}
Let $X_1, \ldots, X_n$ be iid random variables with common distribution $F$, and
\begin{equation}\label{eq:Fp}
\Delta(x):= F(x)-F(x-),\;\; F_p(x):= pF(x-)+ (1-p)F(x),\;\;\;\forall x,\;\;p\in [0,1].
\end{equation}
Then
 \begin{align}
& V_n^*(\sT_{\rm s, r};F)
\nonumber
\\
  &\;= \sup_{\theta\geq 0,\, p\in[0,1]}\;\bigg\{
   [1-F_p^{n}(\theta)]
\Big[ \theta + \frac{\int_{\theta}^\infty (1-F_p(x))\rd x}{1-F_p(\theta)}\Big]
+ F_p^{n-1}(\theta)\Big[\int_{[0, \theta]}  x\, \rd F(x) - p \theta  \Delta(\theta)\Big]
\bigg\}
\label{eq:iid}
\\*[2mm]
&\; = \sup_{\theta\geq 0,\, p\in[0,1]}\;\bigg\{
  [1-F_p^{n-1}(\theta)]
\Big[ \theta + \frac{\int_{\theta}^\infty (1-F_p(x))\rd x}{1-F_p(\theta)}\Big]
+ F_p^{n-1}(\theta)\int_0^\infty [1-F_p(x)]\rd x
\bigg\}.
\label{eq:iid-1}
 \end{align}
\end{theorem}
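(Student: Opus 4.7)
The plan is to compute $\bE X_{\tau_p(\theta)}$ directly for each fixed pair $(\theta, p)$, and then take $\sup_{\theta \geq 0, p \in [0,1]}$, since by definition of $\sT_{\rm s, r}$ the rules $\tau_p(\theta)$ exhaust this class. The starting observation is that by \eqref{eq:Fp} the probability of ``stopping at time $t$, given that time $t$ is reached'' is $1 - F_p(\theta)$, and the corresponding events are iid across $t$. Writing $q := F_p(\theta)$, this yields $\bP\{\tau_p(\theta) = t\} = q^{t-1}(1 - q)$ for $1 \leq t \leq n-1$ and $\bP\{\tau_p(\theta) = n\} = q^{n-1}$. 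Using independence of $X_t$ from both the ``no-stop'' history and from $\xi_t$, one computes $\bE[X_t \mathbf{1}\{\tau_p(\theta) = t\}] = q^{t-1}\bigl[\int_{(\theta, \infty)} x\, dF(x) + p\theta\Delta(\theta)\bigr]$ for $t \leq n-1$. For $t = n$, the event $\{\tau_p(\theta) = n\}$ depends only on $X_1, \ldots, X_{n-1}$ and $\xi_1, \ldots, \xi_{n-1}$, so $\bE[X_n \mathbf{1}\{\tau_p(\theta) = n\}] = q^{n-1} \bE X_1$. Summing the geometric series gives
\begin{equation*}
\bE X_{\tau_p(\theta)} \;=\; \frac{1 - q^{n-1}}{1 - q}\Bigl[\int_{(\theta, \infty)} x\, dF(x) + p\theta\Delta(\theta)\Bigr] + q^{n-1} \bE X_1.
\end{equation*}

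Next I would convert this into the stated form via the integration-by-parts identity $\int_{(\theta, \infty)} x\, dF(x) = \theta(1 - F(\theta)) + \int_\theta^\infty (1 - F(x))\, dx$, combined with the key observation that $F$ and $F_p$ differ only on the countable set of atoms of $F$ and therefore agree Lebesgue-almost everywhere, so $\int_a^b (1 - F(x))\, dx = \int_a^b (1 - F_p(x))\, dx$ for all $0 \leq a < b \leq \infty$. A short computation then shows $\int_{(\theta, \infty)} x\, dF(x) + p\theta\Delta(\theta) = \theta(1 - q) + \int_\theta^\infty (1 - F_p(x))\, dx$, which upon substitution yields \eqref{eq:iid-1}. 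To pass from \eqref{eq:iid-1} to \eqref{eq:iid}, I would use $\int_0^\infty(1 - F_p(x))\, dx = \bE X_1$ to split the $q^{n-1}\bE X_1$ term: regroup $q^{n-1}\int_\theta^\infty (1 - F_p(x))\, dx$ with the first summand to upgrade the factor $(1 - q^{n-1})$ to $(1 - q^n)$, and rewrite the remaining $q^{n-1}\int_0^\theta (1 - F_p(x))\, dx - q^{n-1}(1-q)\theta$ as $q^{n-1}\bigl[\int_{[0, \theta]} x\, dF(x) - p\theta\Delta(\theta)\bigr]$ via the same integration-by-parts identity applied on $[0, \theta]$.

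The main obstacle is purely bookkeeping: one must track carefully when the atoms of $F$ contribute to Riemann--Stieltjes integrals (as in $\int_{[0,\theta]} x\, dF(x)$ or $\Delta(\theta)$) versus when they are invisible to the Lebesgue integrals $\int (1 - F_p)\, dx$. The term $p\theta\Delta(\theta)$ arises precisely because the event $\{X_t = \theta, \xi_t = 1\}$ must be counted in the stopping event, and the ``switch'' between $F$ and $F_p$ in the integrands is what makes the two equivalent forms \eqref{eq:iid} and \eqref{eq:iid-1} coexist cleanly. Once this atom--randomization coupling is handled consistently, the algebra is routine, and passing to $\sup_{\theta, p}$ is immediate since every element of $\sT_{\rm s, r}$ is of the form $\tau_p(\theta)$; the boundary cases $q \in \{0, 1\}$ are verified separately and fit both formulas by continuous extension.
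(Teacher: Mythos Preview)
Your proof is correct and follows essentially the same approach as the paper: both compute $\bE X_{\tau_p(\theta)}$ by decomposing into the events $\{\tau_p(\theta)=t\}$, use independence to factor, sum the resulting geometric series, and then apply the integration-by-parts identity together with the observation that $\int (1-F)=\int(1-F_p)$. The only cosmetic difference is that the paper derives \eqref{eq:iid} and \eqref{eq:iid-1} via two parallel decompositions (summing the stopping indicator to $n$ versus $n-1$), whereas you derive \eqref{eq:iid-1} first and then obtain \eqref{eq:iid} by the algebraic regrouping you describe; both routes are equivalent.
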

\begin{remark}
 Letting $p=0$ or $p=1$ in \eqref{eq:iid}--\eqref{eq:iid-1}
 of Theorem~\ref{th:prophet} we obtain the exact formulas for the reward of 
 the optimal single threshold stopping rules from $\sT_{\rm s}$.
\end{remark}
\begin{remark}
Since $M_n(F) \geq V_n^*(\sT_{\rm all};F)\geq V_n^*(\sT_{\rm s, r};F)$, the theorem  implies lower bounds
on $M_n(F)=\bE \max_{1\leq t\leq n} X_t$, and
on the performance of the optimal stopping rule
in the class of all stopping rules,
$V_n^*(\sT_{\rm all};F)$.
It is worth noting that (\ref{eq:iid})  provides
an  improvement of Markov's inequality for
 random variables of the type $\max_{1\leq t\leq n}  X_t$,
 where $X_1, \ldots, X_n$ are non--negative  iid random variables.
 To see this, observe that  for $p=0$ we have $F_0=F$,  
 $1-F^n(\theta)=\bP\{\max_{1\leq t\leq n} X_t>\theta\}$, $\forall \theta$, and
 $M_n(F)\geq V_n^*(\sT_{\rm s,r}; F)$. 
 \end{remark}
 \par
 Some implications of
 Theorem~\ref{th:prophet}
 are contained  in the following two corollaries.

 \begin{corollary}\label{cor:1}
 Let $X_1, \ldots, X_n$ be iid random variables with common distribution
 $F\in \cF_{[0,\infty)}$.
 Then
 \begin{equation}\label{eq:cor-1}
  V_n^*(\sT_{\rm s, r}; F)\geq \big[1-(1-\tfrac{1}{n})^{n-1}\big] M_n(F) + \big(1-\tfrac{1}{n})^{n-1}M_1(F).
\end{equation}
Moreover, for every $n$ one has
\begin{equation}\label{eq:cor-11}
 \cR_n(\sT_{\rm s, r};F)\geq  1- \Big(1-\frac{1}{n}\Big)^n\geq  1- \frac{1}{e},
 \;\;\;\forall F\in \cF_{[0,\infty)}.
\end{equation}
\end{corollary}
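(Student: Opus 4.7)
\textbf{Proof plan for Corollary \ref{cor:1}.}

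The idea is to obtain \eqref{eq:cor-1} by judiciously specializing the supremum in \eqref{eq:iid-1} of Theorem~\ref{th:prophet}. First I would choose a pair $(\theta,p)$ that makes $F_p(\theta)$ exactly equal to $1-\tfrac1n$. Setting $\theta:=U(n)=F^\leftarrow(1-\tfrac1n)$, one has $F(\theta-)\leq 1-\tfrac1n\leq F(\theta)$, and since $F_p(\theta)=pF(\theta-)+(1-p)F(\theta)$ is continuous and monotone in $p$, running $p$ from $0$ to $1$ sweeps $F_p(\theta)$ continuously between $F(\theta)$ and $F(\theta-)$; hence there exists $p\in[0,1]$ with $F_p(\theta)=1-\tfrac1n$. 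For this choice,
\[
 1-F_p(\theta)=\tfrac1n,\qquad F_p^{n-1}(\theta)=\bigl(1-\tfrac1n\bigr)^{n-1}.
\]

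Next I would substitute into \eqref{eq:iid-1}, exploiting the fact that $F_p$ and $F$ differ only on the (countable, hence Lebesgue-null) set of atoms of $F$, so
\[
 \int_\theta^\infty\!(1-F_p(x))\,\rd x=\int_\theta^\infty\!(1-F(x))\,\rd x,
 \qquad
 \int_0^\infty\!(1-F_p(x))\,\rd x=M_1(F).
\]
Plugging these into \eqref{eq:iid-1} yields the lower bound
\[
 V_n^*(\sT_{\rm s,r};F)\;\geq\;\bigl[1-(1-\tfrac1n)^{n-1}\bigr]\Bigl[\theta+n\!\int_\theta^\infty\!(1-F(x))\,\rd x\Bigr]+\bigl(1-\tfrac1n\bigr)^{n-1}M_1(F).
\]

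The remaining core step is the deterministic inequality
\[
 \theta+n\!\int_\theta^\infty\!(1-F(x))\,\rd x\;\geq\;M_n(F).
\]
I would prove this by writing $M_n(F)=\int_0^\infty(1-F^n(x))\,\rd x$, splitting at $\theta$, bounding the integrand by $1$ on $[0,\theta]$, and using the factorization $1-F^n(x)=(1-F(x))\sum_{k=0}^{n-1}F^k(x)\leq n(1-F(x))$ on $[\theta,\infty)$. Combining this with the previous display gives \eqref{eq:cor-1}.

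Finally, for \eqref{eq:cor-11} I would apply the elementary bound $M_n(F)\leq nM_1(F)$ (from $\bE\max_i X_i\leq\sum_i\bE X_i$) so that $M_1(F)/M_n(F)\geq 1/n$. Dividing \eqref{eq:cor-1} by $M_n(F)$ then yields
\[
 \cR_n(\sT_{\rm s,r};F)\;\geq\;\bigl[1-(1-\tfrac1n)^{n-1}\bigr]+\tfrac1n\bigl(1-\tfrac1n\bigr)^{n-1}\;=\;1-\bigl(1-\tfrac1n\bigr)^n\;\geq\;1-\tfrac1e,
\]
the last inequality being the well-known monotonicity $(1-\tfrac1n)^n\uparrow 1/e$. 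The only mildly delicate point in the whole argument is the existence of a randomization $(\theta,p)$ that realizes the exact quantile $F_p(\theta)=1-\tfrac1n$ when $F$ has atoms; all subsequent steps are routine and reduce to the single deterministic inequality involving $M_n$ above.
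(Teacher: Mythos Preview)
Your proof of \eqref{eq:cor-1} is essentially identical to the paper's: you choose the same $(\theta,p)$ with $F_p(\theta)=1-\tfrac1n$, substitute into \eqref{eq:iid-1}, and invoke the same deterministic inequality $M_n(F)\leq \theta+n\int_\theta^\infty(1-F)\,\rd x$ (the paper's \eqref{eq:Mn}), which you correctly justify by splitting at $\theta$ and using $1-F^n\leq n(1-F)$.

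The one genuine difference is in how \eqref{eq:cor-11} is deduced. You divide \eqref{eq:cor-1} by $M_n(F)$ and appeal to the elementary bound $M_1(F)\geq M_n(F)/n$ to collapse the constants. The paper instead goes back to the \emph{other} representation \eqref{eq:iid} of Theorem~\ref{th:prophet} and repeats the same substitution, which directly produces the coefficient $1-(1-\tfrac1n)^n$ in front of $M_n(F)$ (with a nonnegative remainder term). Both routes are short and correct; yours has the advantage of using only one of the two formulas in Theorem~\ref{th:prophet}, while the paper's route yields the slightly stronger intermediate bound \eqref{eq:VM} as a byproduct.
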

\begin{proof}
Let  $\theta=U(n)$, and
\begin{equation}\label{eq:p}
 p=\left\{\begin{array}{cl}
0, & F(U(n))= 1-1/n,\\[2mm]
\frac{F(U(n))-(1-\frac{1}{n})}{F(U(n))- F(U(n)-)}, & {\rm otherwise},
          \end{array}
\right.
\end{equation}
where $U(t)$ is the $(1-1/t)$--quantile
of distribution $F$ [see  \eqref{eq:U}].
With this choice of $p$, $F_p(U(n))=1-1/n$, and therefore
it follows from
(\ref{eq:iid-1}) that
\begin{align}
 &V_n^*(\sT_{\rm s, r}; F) \geq V_n(\tau_p(U(n)); F)
 \nonumber
 \\
 &= \big[1-F_p^{n-1}(U(n))\big]
 \Big\{U(n)+ n \int_{U(n)}^\infty [1-F(x)]\rd x\Big\}
 + F_p^{n-1}(U(n))\int_0^\infty [1-F(x)]\rd x.
 \label{eq:111}
 \end{align}
Note also that
 \begin{equation}\label{eq:Mn}
  M_n(F)=\int_0^\infty [1-F^n(x)]\rd x\leq U(n)+n\int_{U(n)}^\infty [1-F(x)]\rd x.
 \end{equation}
Therefore
 \begin{align*}
 &V_n^*(\sT_{\rm s, r}; F) \geq V_n(\tau_p(U(n)); F)
=
 \big[1-(1-\tfrac{1}{n})^{n-1}\big] M_n(F)
 + (1-\tfrac{1}{n})^{n-1} M_1(F),
 \end{align*}
as claimed in \eqref{eq:cor-1}.
The same chain of inequalities applied to \eqref{eq:iid}
leads to
\begin{equation}\label{eq:VM}
 V_n^*(\sT_{\rm s, r}; F) \geq
 \big[1-(1-\tfrac{1}{n})^{n}\big] M_n(F)
 + (1-\tfrac{1}{n})^{n-1} \int_{[0,U(n))} x\rd F(x),
\end{equation}
which implies  \eqref{eq:cor-11}.
\end{proof}
The proof shows that inequality \eqref{eq:cor-11}
holds for the stopping rule $\tau_p(\theta)$ with
threshold $\theta=U(n)$
and parameter $p$ defined in \eqref{eq:p}.
Note that if the class
$\sT_{\rm s}$ is considered, then \eqref{eq:cor-11} is still
fulfilled, but only for all distributions $F$  with the realizable $(1-1/n)$--quantile. Thus,
the following non--asymptotic prophet inequality holds
\begin{equation*}
 \cR^*_n\big(\sT_{\rm s}; \cG^n\big) \geq
 1- \Big(1-\frac{1}{n}\Big)^n \geq
 1-\frac{1}{e},\;\;\;\forall n,
\end{equation*}
where
\begin{equation}\label{eq:G-n}
 \cG^n:=\Big\{F\in \cF_{[0,\infty)}: 1-\frac{1}{n}\in {\rm range}(F)\Big\}.
\end{equation}
Note that class $\cG^n$ is rather rich: it includes
all continuous distributions on $[0, \infty)$, and discrete distributions with  realizable $(1-1/n)$--quantile.
In particular,
if $F\in \cC_{[0,\infty)}$
then, obviously,
${\rm range}(F)\supseteq [0,1)$
and  $\cC_{[0,\infty)} \subset  \cG^n$ for all $n$.
Therefore
\[
 \cR_n^*\big(\sT_{\rm s}; \cC_{[0,\infty)}\big)
 \geq
 1- \Big(1-\frac{1}{n}\Big)^n \geq 1-\frac{1}{e},\;\;\;\forall n.
\]
The above prophet inequality holds {\em asymptotically} as $n\to\infty$
under much weaker conditions when $F$ is continuous
on the right tail.
In particular, if for $x_0>0$
\[
 \cC_{[0,\infty)}(x_0):=\big\{ F\in \cF_{[0,\infty)}:
 F
 \hbox{ is continuous on } [x_0, \infty)\big\},
\]
then for any $x_0>0$
there exists $n_0=n_0(x_0)$ such that $1-1/n \in {\rm range}(F)$ for all
$n\geq n_0$.  Therefore
\[
 \liminf_{n\to\infty} \cR_n^*\big(\sT_{\rm s}; \cC_{[0,\infty)}(x_0)\big)
 \geq 1-\frac{1}{e}.
\]
\begin{remark}
 The inequality \eqref{eq:VM} implies 
 that 
 $
  M_n(F)-V_n^*(\sT_{\rm s, r}; F) \leq \big(1- \tfrac{1}{n}\big)^n M_n(F)$, and 
   therefore 
  \begin{equation}\label{eq:A-inequality}
   \cA_n^*(\sT_{\rm s, r}; \cF_{[0,1]}) \leq \Big(1- \frac{1}{n}\Big)^n.
  \end{equation}
A similar difference--type prophet inequality 
with constant $(1-1/n)^n$ 
has been established in the 
literature 
for 
the problem of
stopping the sequence $\{Y_t=X_t- ct: t=1, \ldots, n\}$ 
with the cost of observation $c\geq 0$, where  
$X_1, \ldots, X_n$ are independent \cite{Jones} 
or iid \cite{Ester2} random variables. It is worth noting 
that the optimal rule in the stopping problem with the 
iid random variables and 
cost of observations
is the single threshold stopping rule; so 
there is a close connection between  
\eqref{eq:A-inequality} and the results in \citeasnoun{Ester2}.
It is proved in \citeasnoun{Ester2} that the constant $(1- 1/n)^n$  
is sharp. However, sharpness of the inequality  is understood  
not only in the sense of  the least favorable distribution  $F$, 
but also in the sense of the least favorable observation cost~$c$. In fact, 
our results below demonstrate that inequality 
\eqref{eq:A-inequality} can be improved 
when there is no observation 
cost, $c=0$.
\end{remark}
\par
 A byproduct of the proof of Corollary~\ref{cor:1} is
the following
distribution--free
inequality
on the sequence $\{M_n(F), n\geq 1\}$ of maximal order statistics corresponding to
sample sizes $n=1,2,\ldots$.
This result  is interesting in its own right.
\begin{corollary}\label{cor:2}{\rm \textbf{(Distribution
free bounds on the growth of expected extremes)}}
Let $X_1, X_2, \ldots$ be iid random variables with
  common distribution function $F$, and let
  $M_t(F):=\bE\max_{1\leq i\leq t} X_i$ for any $t \geq 1$.
  Then  for
integers $n\geq 1$ and $k\geq 0$ and any $F \in \cF_{[0,\infty)}$
one has
\begin{equation*}
 M_n(F) \geq  (1-\lambda_{n,k})  M_{n+k}(F) +
 \lambda_{n,k}  M_1(F),\;\;\;
 \;\;\;\lambda_{n,k}:= (1-\tfrac{1}{n+k})^{n-1}.
\end{equation*}
\end{corollary}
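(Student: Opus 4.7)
The plan is to piggyback on the proof of Corollary~\ref{cor:1}, replacing the $(1-1/n)$-quantile $U(n)$ by the $(1-1/(n+k))$-quantile $U(n+k)$. The trivial bound $M_n(F)\ge V_n^*(\sT_{\rm all};F)\ge V_n^*(\sT_{\rm s, r};F)$ combined with formula \eqref{eq:iid-1} of Theorem~\ref{th:prophet} yields a lower bound on $M_n(F)$ at \emph{any} choice of threshold $\theta$ and randomization parameter $p\in[0,1]$. The idea is to pick $\theta:=U(n+k)$ and, mimicking the recipe \eqref{eq:p} in the proof of Corollary~\ref{cor:1}, to choose $p\in[0,1]$ so that $F_p(U(n+k))=1-1/(n+k)$ exactly.

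With this choice of $(\theta,p)$, since $F_p$ differs from $F$ only on the countable set of atoms of $F$ (hence agrees with $F$ Lebesgue-a.e.), formula \eqref{eq:iid-1} yields, in complete analogy with \eqref{eq:111},
\[
V_n(\tau_p(U(n+k));F) = (1-\lambda_{n,k})\Big\{U(n+k) + (n+k)\!\int_{U(n+k)}^{\infty}[1-F(x)]\,\rd x\Big\} + \lambda_{n,k}\,M_1(F),
\]
because $F_p^{n-1}(U(n+k))=(1-1/(n+k))^{n-1}=\lambda_{n,k}$ and $1/(1-F_p(U(n+k)))=n+k$, while $\int_0^\infty[1-F_p(x)]\rd x = M_1(F)$.

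The decisive step is then to invoke the distribution-free tail bound \eqref{eq:Mn}, applied with sample size $n+k$ in place of $n$:
\[
M_{n+k}(F) \le U(n+k) + (n+k)\int_{U(n+k)}^\infty[1-F(x)]\,\rd x.
\]
Since $1-\lambda_{n,k}\ge 0$, substituting this into the preceding display produces
\[
V_n(\tau_p(U(n+k));F) \ge (1-\lambda_{n,k})\,M_{n+k}(F) + \lambda_{n,k}\,M_1(F),
\]
and chaining with $M_n(F)\ge V_n^*(\sT_{\rm s,r};F)\ge V_n(\tau_p(U(n+k));F)$ concludes the argument.

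I do not anticipate any real obstacle: the proof is essentially a reparametrization of Corollary~\ref{cor:1}, with $n+k$ replacing $n$ only in the quantile level (the outer exponent stays at $n-1$ because there are still only $n$ observations available to the stopper). The only minor verification is that the $p$ defined analogously to \eqref{eq:p} lies in $[0,1]$, which follows from the same elementary observation as in the original proof, namely that the target level $1-1/(n+k)$ always lies in the closed interval $[F(U(n+k)-),F(U(n+k))]$. The boundary cases $k=0$ and $n=1$ specialize to the trivial inequalities $M_n(F)\ge M_1(F)$ and $M_1(F)\ge M_1(F)$, respectively, consistent with the stopping problem becoming degenerate.
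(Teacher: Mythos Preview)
Your proposal is correct and follows essentially the same route as the paper's proof: apply \eqref{eq:iid-1} with threshold $\theta=U(n+k)$ and $p$ chosen so that $F_p(U(n+k))=1-\tfrac{1}{n+k}$, then bound the bracketed term from below by $M_{n+k}(F)$ via \eqref{eq:Mn} with $n$ replaced by $n+k$. The paper's argument is more terse but identical in content.
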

\begin{proof}
It follows from \eqref{eq:iid-1} that for any $\alpha\geq 1$ and $p\in [0,1]$
one has
\begin{align*}
 M_n(F)  &\geq V_n(\tau_p(U(\alpha)); F)
 \\
& = \big[1-F_p^{n-1}(U(\alpha))\big]
 \Big\{U(\alpha)+  \frac{\int_{U(\alpha)}^\infty [1-F(x)]\rd x}{1-F_p(U(\alpha))}\Big\}
 + F_p^{n-1}(U(\alpha))\int_0^\infty [1-F(x)]\rd x.
 \end{align*}
Choosing $\alpha=n+k$ and  $p$ such that $F_p(U(n+k))= 1- 1/(n+k)$ [see \eqref{eq:p}]
in the previous formula 
and applying \eqref{eq:Mn}
with $n$ replaced by $n+k$
we complete the proof.
\end{proof}

\begin{remark}
\citeasnoun{Downey} study the rate of growth of the sequence $\{M_n(F), n\geq 1\}$,
 and define ordering of distribution functions based on $\{M_n(F), n\geq 1\}$. Among other results,
 this paper proves the inequality
 \[
  M_n(F) \geq (1-e^{-1}) \Big\{U(n) + n\int_{U(n)}^\infty [1-F(x)]\rd x \Big\},\;\;
  \forall F\in \cF_{[0,\infty)}.
 \]
 This inequality 
is an immediate  consequence of \eqref{eq:iid} 
with $\theta=U(n)$ and $p$ given in \eqref{eq:p}.
For additional results
on behavior of the sequence of expectations of the maximum of  iid random variables
we refer to \citeasnoun{Downey-1}.
\end{remark}

 \section{Discussion}
 \label{sec:sharpness}
Some statements about sharpness of prophet inequalities for  single threshold stopping rules appeared in  
previous literature.
Our discussion of these statements is divided in two parts. In the first part
 of this section we discuss
 the results of  \protect\citeasnoun{Samuel-Cahn},
 while the second part deals with results from the online auctions literature.


\subsection{Results of \protect\citeasnoun{Samuel-Cahn}}
It is shown in Theorem~2 in \citeasnoun{Samuel-Cahn} that
for the class $\sT_{\rm s}$ of single threshold rules without
randomization
one has 
 \begin{equation}\label{eq:Ester}
  \lim_{n\to\infty} \cR^*_n(\sT_{\rm s}; \cF_{[0,\infty)}) = \frac{1}{2}.
 \end{equation}
In the proof  of \eqref{eq:Ester}
\citeasnoun{Samuel-Cahn}  considers a discrete
least favorable
distribution $F_*$
 having three atoms at $0$, $a\in (0,1)$ and $1$ with probabilities
$1-(b+c)/n$, $c/n$ and $b/n$, respectively; here $b>0$, $c>0$ and $b+c<n$.
Thus,  the distribution $F_*$ is
\[
 F_*(x) = \left\{ \begin{array}{ll}
                  1- \frac{b+c}{n}, & 0\leq x <a,\\[2mm]
                  1-\frac{b}{n}, & a\leq x< 1,\\[2mm]
                  1, & x\geq 1.
                  \end{array}
\right.
\]
In this example a straightforward calculation yields
\begin{align*}
M_n(F_*) = a\bigg[\Big(1-\frac{b}{n}\Big)^n-\Big(1-\frac{b+c}{n}\Big)^n\bigg]
+1-\bigg(1-\frac{b}{n}\bigg)^n.
\end{align*}
For the stopping rule $\tau_0(\theta)$ in \eqref{eq:tau-theta} there are three
possible thresholds $\theta=0$, $\theta=a$ and $\theta=1$.
In view of~(\ref{eq:iid-1}) with $p=0$,
the corresponding
values are given by
\begin{align*}
&
\bE X_{\tau_0(0)} = \bigg[1-\Big(1-\frac{b+c}{n}\Big)^{n-1}\bigg]\frac{ac+b}{c+b}
+\Big(1-\frac{b+c}{n}\Big)^{n-1}\frac{ac+b}{n},
\\[2mm]
&
\bE X_{\tau_0(a)} =1-\Big(1-\frac{b}{n}\Big)^{n-1}
+\Big(1-\frac{b}{n}\Big)^{n-1}\frac{ac+b}{n},
\\
&
\bE X_{\tau_0(1)}=\bE(X_1)=\frac{ac+b}{n}.
\end{align*}
Now, put
$a= a_n= 1/n$, $b=b_n= 1/n$, and $c=c_n=\sqrt{n}$,
and let $F_n$ stand for the distribution $F_*$ with parameters $a_n$, $b_n$ and $c_n$.
Then, asymptotically,
\[
 M_n(F_n)\;\sim\; \frac{2}{n},\;\;\;\bE X_{\tau_0(0)}\;\sim\; \frac{1}{n},
 \;\;\;
 \bE X_{\tau_0(a_n)}\; \sim\; \frac{1}{n}, \;\;\;\bE X_{\tau_0(1)}\sim \frac{1}{n^{3/2}},
\]
where $v_n\sim w_n$ means that $\lim_{n\to\infty}(v_n/w_n)=1$.
Thus,
the limit $1/2$ is achieved asymptotically for
$\cR_n(\sT_{\rm s}; F_n)$.
\par 
It is instructive to calculate the competitive ratio of the
best stopping rule  from $\sT_{\rm s,r}$  (the best rule with randomization)
on the sequence of  
distributions $\{F_n\}$ defined above.
Consider the stopping rule $\tau_p(\theta)$ defined in \eqref{eq:stop-random}
with 
\[
 \theta=a_n=\frac{1}{n},\;\;p=p_n=
 \frac{F_n(a_n)-(1-\frac{1}{n})}{F_n(a_n)-F_n(a_n-)}=\frac{1-b_n}
 {c_n}=\frac{1-\frac{1}{n}}{\sqrt{n}}.
\]
With this choice of parameters, in view of   \eqref{eq:iid}
\begin{align*}
 \bE X_{\tau_{p_n}(a_n)} &=
 \Big[1-\Big(1-\frac{1}{n}\Big)^n\Big]\Big[a_n+ n\int_{a_n}^\infty (1-F_n(x))\rd x\Big]
 + \Big(1-\frac{1}{n}\Big)^{n-1}\Big[\int_{[0,a_n]} x \rd F_n(x) - p_na_n \Delta(a_n)\Big]
 \\
 &= \Big[1-\Big(1-\frac{1}{n}\Big)^n\Big]\big[a_n + b_n(1-a_n)\big]
 +\Big(1-\frac{1}{n}\Big)^{n-1} \Big[\frac{a_n c_n}{n} - \frac{a_n(1-b_n)}{n}\Big]
 \\
 &=
 \Big[1-\Big(1-\frac{1}{n}\Big)^n\Big]\Big(\frac{2}{n}- \frac{1}{n^2}\Big)
 + \Big(1-\frac{1}{n}\Big)^{n-1} \frac{1}{n\sqrt{n}} 
 \Big[1-\frac{1}{\sqrt{n}}+\frac{1}{n\sqrt{n}}\Big].
\end{align*}
Since $M_n(F_n)\sim 2/n$, on the sequence of distributions $\{F_n\}$ we obtain
\[
 \lim_{n\to\infty} \frac{\bE X_{\tau_{p_n}(a_n)}}{M_n(F_n)} = 1-\frac{1}{e}.
\]
Thus,
\[
 \lim_{n\to\infty} \cR_n^*(\sT_{\rm r, s}; \cF_{[0,\infty)}) = 1-\frac{1}{e},
\]
i.e., the prophet inequality \eqref{eq:cor-11} is asymptotically sharp
for the single threshold stopping rules with randomization, and it is achieved on the same sequence of worst--case distributions as the prophet inequality~\eqref{eq:Ester}.

\subsection{Online auctions literature}  
Recently there has been a renewed interest in  prophet inequalities due to
relations between optimal stopping theory and design of  online auction mechanisms.
In one  such scenario, a seller has a single item to sell to~$n$ customers where  the
$i$-th customer, $i=1, \ldots, n$, has  random valuation $X_i$ for the item. The valuations of different customers are assumed to be independent. The customers arrive sequentially, and they are
presented with a price
(perhaps, customer-dependent).  The customer purchases the item if
his/her valuation exceeds the quoted price.
The goal of the seller is to
design  a  pricing policy
so as to maximize
the revenue. This setting is often referred to as a {\em posted-price auction}.
\par
{\em The prophet secretary} model in
 \citeasnoun{Esfandiari} and \citeasnoun{Ehsani}
assumes that valuations
$X_1, \ldots, X_n$ are independent random variables with distributions
$F_1, \ldots, F_n$, and customers arrive sequentially in a random order.
This implies  that the realized sequence of valuations is $X_{\pi_1}, \ldots, X_{\pi_n}$,
where $\pi=(\pi_1, \ldots, \pi_n)$ is a random permutation of
$\{1, \ldots, n\}$, independent of $X_1, \ldots, X_n$.
Various pricing policies were studied in this setting
and corresponding
prophet inequalities were derived.
We refer to \citeasnoun{Correa18} for review of results in this area.
\par
 \citeasnoun{CorreaPPM} showed
that for continuous distribution functions one has 
\begin{equation}\label{eq:1-1/e}
 \cR_n(\sT_{\rm s}; F) \geq 1-\frac{1}{e},\;\;\;\forall F\in
 \cC_{[0,\infty)}.
\end{equation}
A similar result has been obtained in
\citeasnoun{Ehsani} for the class of all distributions $\cF_{[0,\infty)}$,
but that paper considered stopping rules with random breaking of ties
when the observed random variable
$X_t$ is {\it exactly equal to} a chosen threshold.
\citeasnoun{Ehsani} (see their Theorem~21)  claim that the prophet inequality
$\cR_n^*(\sT_{\rm s}; \cF_{[0,\infty)})\geq 1-e^{-1}$
is asymptotically sharp in the iid setting. In the proof of this statement
the authors first consider
the distribution $F_*$ having two atoms at $n/(e-1)$ and $(e-2)/(e-1)$ with
probabilities $1/n^2$ and $1-1/n^2$ respectively.
For this distribution there are two possible choices of the threshold:
$\theta_1=0$
and  $\theta_2= (e-2)/(e-1)$. Then \citeasnoun{Ehsani} show that the best
single threshold stopping rule achieves the competitive ratio at most $0.58$. Furthermore, the authors argue that this bound
can be improved by randomization to \mbox{$1-e^{-1}$}.
\par
For the distribution $F_*$ we have
\begin{align*}
 M_n(F_*) = \frac{e-2}{e-1}\Big(1-\frac{1}{n^2}\Big)^n +
 \frac{n}{e-1}\Big[1- \Big(1-\frac{1}{n^2}\Big)^n\Big].
\end{align*}
Using (\ref{eq:iid-1}) for the single threshold rule with no randomization ($p=0$),
we obtain
\begin{align*}
 \bE X_{\tau_0(\theta_1)} = \frac{e-2}{e-1}\Big(1-\frac{1}{n^2}\Big) + \frac{1}{n(e-1)},
\end{align*}
and
\begin{align*}
 \bE X_{\tau_0(\theta_2)} =
 \frac{n}{e-1}\bigg[1-\Big(1-\frac{1}{n^2}\Big)^{n-1}\bigg] +
 \bigg[ \frac{e-2}{e-1}\Big(1-\frac{1}{n^2}\Big) + \frac{1}{n(e-1)}\bigg]
 \Big(1-\frac{1}{n^2}\Big)^{n-1}.
\end{align*}
These formulas imply that
\[
 \lim_{n\to\infty} M_n(F_*)=1,\;\;\lim_{n\to\infty} \bE X_{\tau_0(\theta_1)} =
 \frac{e-2}{e-1},\;\;
 \lim_{n\to\infty} \bE X_{\tau_0(\theta_2)} = 1,
\]
so that for the specified distribution
$\lim_{n\to\infty} \cR_n(\sT_{\rm s}; F_*)=1$. Thus,  for the distribution
$F_*$  given in \protect\citeasnoun{Ehsani} the 
single threshold stopping rules with no randomization 
(from $\sT_{\rm s}$)
achieve a  factor of one as $n\to\infty$.
In fact, for any distribution with two atoms
the single threshold rules achieve the factor one;
this has been already pointed out in Remark~1 in \citeasnoun{Samuel-Cahn}.
\par
The difference between the above calculation  and the conclusion
in \citeasnoun{Ehsani} stems from the fact that,
in spite of the
close connection between optimal stopping and online auctions, the
algorithms considered in the latter area [see, e.g.,
\citeasnoun{Esfandiari},
\citeasnoun{Ehsani}, \citeasnoun{CorreaPPM}, etc.]
are not completely equivalent to those in  optimal stopping problems
[see, e.g., \citeasnoun{ChowRobSieg}, \citeasnoun{hill-kertz81b}, \citeasnoun{Samuel-Cahn},
\citeasnoun{Ber-Gnedin},
\citeasnoun{Hill-Kerz}, \citeasnoun{BF1996}, \citeasnoun{AGS2002}, etc.].
In particular, the decision rules in
\citeasnoun{Esfandiari},
\citeasnoun{Ehsani} and  \citeasnoun{CorreaPPM} (which are fully in line with various other papers in this strand of literature)
are left undefined on the event where the random variables $X_1, \ldots, X_n$
do not exceed
the corresponding thresholds [see, e.g., Algorithm Prophet Secretary on page~1687 in \citeasnoun{Esfandiari}
and Algorithm~1 in \citeasnoun{CorreaPPM}]. Strictly speaking, the considered decision rules are not stopping times because 
they do not take values in the stopping set $\{1, \ldots, n\}$.
Effectively,  the implication of this fact is
that if $X_1, \ldots, X_n$ are below the respective thresholds then the
obtained reward
is equal to zero.
This formally corresponds to computation of
the reward of a single threshold rule without
the last term on the right hand side
of (\ref{eq:iid}). This  distinction 
affects statements about
sharpness of prophet inequalities derived in the  online auctions literature, and the comparison  across literatures should take this into consideration.
In contrast,
 stopping times are naturally defined to be equal to the terminal 
 value $X_n$ of the sequence at the end of the horizon~$n$
 on the event when all observations are less than the
 corresponding thresholds, i.e.,
the last observation is selected. 
All this implies that 
sharpness of the inequality 
\eqref{eq:cor-11} for single threshold stopping times does not follow from the results in \citeasnoun{Ehsani}.
\par 
In the next section we demonstrate that
derivation of sharp non--asymptotic prophet inequalities
for the stopping rules from the class  $\sT_{\rm s,r}$
is equivalent to the solution of an infinite  two--person  zero--sum 
game on the unit square with
particular  payoff kernels.
The value of this game provides a sharp constant in the prophet inequality, while
the optimal solution yields the corresponding 
least favorable distribution and 
optimal single threshold stopping rule.

\section{Game--theoretic characterization of prophet inequalities}
\label{s-game}
Let us  introduce
the following notation. 
For $(x, y)\in [0,1]\times [0,1]$ define
 \begin{align}\label{eq:R}
 &R(x, y) :=\frac{1-x^{n-1}}{1-y^n} \min \Big\{1, \frac{1-y}{1-x}\Big\} +x^{n-1}
  \frac{1-y}{1-y^n}=
  \left\{\begin{array}{ll}
\frac{1-x^{n-1}y}{1-y^n}, & x>y,\\*[2mm]
\frac{1-y}{1-y^n}\frac{1-x^n}{1-x}, & x < y,
\\*[2mm]
1, & x=y,
         \end{array}
\right.
\end{align}
and
\begin{align}
 A(x,y):= 1-y^n - (1-x^{n-1}) \min \Big\{1, \frac{1-y}{1-x}\Big\}
 & - x^{n-1}(1-y)
\nonumber
 \\[3mm]
 &=\left\{ \begin{array}{ll}
           y(x^{n-1}-y^{n-1}), & x>y\\[2mm]
           (1-y) \sum_{k=1}^{n-1} (y^k-x^k), & x\leq y.
          \end{array}
\right.
\label{eq:A}
\end{align}
For probability distributions $\lambda$ and $\mu$ on $[0,1]$ we put
\begin{equation}\label{eq:bar-R-A}
 \bar{R}(\lambda, \mu) :=\int_{0}^1\int_0^1 R(x, y) \rd\lambda(x) \rd \mu(y),\;\;
 \;\;\bar{A}(\lambda, \mu) :=\int_0^1 \int_0^1 A(x, y) \rd\lambda(x) \rd \mu(y).
\end{equation}
The main result of this section is given in the next theorem.
\begin{theorem}\label{th:R-A}{\rm \textbf{(Saddlepoint characterization)}}
For any fixed $n$  the following representations hold:
 \begin{align}
  \label{eq:R*}
 & \cR^*_n\big(\sT_{\rm s, r}; \cF_{[0,\infty)}\big) =
  \inf_\mu \sup_\lambda \Big\{\bar{R}(\lambda,\mu): \lambda\in \cF_{[0,1]},\;\mu
  \in \cF_{[0,1]}\Big\},
  \\
& \cA^*_n\big(\sT_{\rm s, r}; \cF_{[0,1]}\big) =
\sup_\mu \inf_\lambda \Big\{\bar{A}(\lambda, \mu):
\lambda\in \cF_{[0,1]},\; \mu\in \cF_{[0,1]}\Big\},
\label{eq:A*}
  \end{align}
where $\cF_{[0,\infty)}$ and $\cF_{[0,1]}$
are the classes of all distributions on the respective domain.
\end{theorem}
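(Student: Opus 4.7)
The plan is to reduce the two suprema/infima over distributions and stopping rules to a single bilinear game on $[0,1]\times[0,1]$ via a quantile change of variables, in which the kernels $R$ and $A$ emerge explicitly from the reward formula of Theorem~\ref{th:prophet}.

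First, I would fix $F\in \cF_{[0,\infty)}$ and a candidate rule $\tau_p(\theta)\in \sT_{\rm s,r}$, and introduce the scalar $y:=F_p(\theta)\in[0,1]$. The point of the randomization parameter $p$ is that it makes the effective CDF value $F_p(\theta)$ traverse the full segment $[F(\theta-),F(\theta)]$ as $p$ varies in $[0,1]$, so that $(p,\theta)$ can realize any target $y\in[0,1]$ regardless of whether $F$ has atoms. Using the tail identity $\int_\theta^\infty(1-F_p(x))\,dx=\int_y^1 F_p^\leftarrow(u)\,du-(1-y)\theta$ and its counterpart for $M_n(F)=\int_0^1(1-u^n)\,dF^\leftarrow(u)$, I would rewrite both the numerator and denominator of the competitive ratio as integrals of the quantile function against simple polynomial weights in $u\in[0,1]$.

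Next I would normalize: by scale invariance of the ratio we may assume $M_n(F)=1$, and then define a probability measure $\mu$ on $[0,1]$ whose density against $dF^\leftarrow$ matches the layer-cake weight appearing in the representation of $M_n(F)$. After this substitution a routine but careful computation should yield
\[
 \cR_n(\tau_p(\theta);F)=\int_0^1 R(x,y)\,d\mu(x),
\]
with $R$ exactly as in \eqref{eq:R}; the two branches of the piecewise definition of $R$ correspond to the two events already present in \eqref{eq:iid-1}, namely ``some $X_t$ exceeds $\theta$ for $t<n$'' and ``all $X_t\le \theta$, so the rule defaults to $X_n$.'' Because $\bar R(\lambda,\mu)$ is linear in $\lambda$, replacing the point mass $\delta_y$ by a general $\lambda\in \cF_{[0,1]}$ does not change the supremum, which yields the sup-inf representation after showing that the map $F\mapsto \mu$ is surjective onto $\cF_{[0,1]}$ (given $\mu$, one inverts the density formula to recover a nondecreasing $F^\leftarrow$, hence a valid $F$). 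For the regret formula \eqref{eq:A*} I would apply the same scheme to $M_n(F)-V_n(\tau_p(\theta);F)$ in place of the ratio; the support restriction $\cF_{[0,1]}$ replaces scale invariance as the normalization, and the kernel $A$ of \eqref{eq:A} arises from the same two-case split via the identity $A(x,y)=(1-y^n)[1-R(x,y)]$.

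The principal difficulties I anticipate are, first, the algebraic verification that the kernels $R$ and $A$ appear exactly as stated after the change of variables, which requires careful tracking of ties at atoms of $F$ through the randomization parameter $p$ (otherwise the identification holds only for continuous $F$, yielding a strict inequality on the full class $\cF_{[0,\infty)}$), and second, establishing surjectivity of $F\mapsto\mu$ onto $\cF_{[0,1]}$ via a constructive inversion, possibly through a limiting sequence of discrete distributions when $\mu$ has no density. Without this surjectivity the game-theoretic equalities would reduce to one-sided bounds, losing the sharpness that is the whole point of Theorem~\ref{th:R-A}.
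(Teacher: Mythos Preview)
Your plan is essentially the paper's own proof: parametrize each rule by its level $F_p(\theta)\in[0,1]$ (which the randomization makes surjective onto $[0,1]$ even at atoms), express $V_n$ and $M_n$ as integrals against $dF^\leftarrow$, normalize (by scale invariance for the ratio, by the support bound $F^\leftarrow(1)\le 1$ for the regret), and identify $d\mu$ with $(1-y^n)\,dF^\leftarrow(y)$, respectively $dF^\leftarrow(y)$. One bookkeeping warning: in the paper's definition \eqref{eq:R}--\eqref{eq:A} the \emph{first} argument of $R$ and $A$ is the threshold level and the \emph{second} is the quantile-integration variable, so in your notation (with $y=F_p(\theta)$) the displayed identity should read $\cR_n(\tau_p(\theta);F)=\int_0^1 R(y,u)\,d\mu(u)$ rather than $\int_0^1 R(x,y)\,d\mu(x)$---otherwise the $x^{n-1}$ factors and the two piecewise branches will not line up with the stated kernels.
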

Theorem~\ref{th:R-A} provides characterization of
 the worst--case competitive ratio
 $\cR^*_n\big(\sT_{\rm s,r}; \cF_{[0,\infty)}\big)$
and the worst--case regret
$\cA^*_n\big(\sT_{\rm s, r}; \cF_{[0,1]}\big)$
  via  two-person zero-sum infinite games on
the unit square.
We refer to \citeasnoun{Karlin}
for detailed discussion of such problems.
 The interpretation of the games
 in  \eqref{eq:R*}--\eqref{eq:A*} is evident: the
 decision maker  generates a random value  $x$ from distribution
 $\lambda$ on $[0,1]$ and sets
 the stopping rule threshold $\theta_x= F^\leftarrow(x)$
 and the success probability $p_x= (F(\theta_x)-x)/(F(\theta_x)-F(\theta_x-))$, while the nature
 selects the distribution $\mu$ on $[0,1]$ which
 is directly related to the quantile function of $F$.
In particular, as the proof of Theorem~\ref{th:R-A} shows,
for the ratio--type prophet inequality the quantile function
$F^{\leftarrow}$ of the least favorable distribution is related to the optimal solution  of the game through the  relationship $\rd \mu (y):=(1-y^n) \rd F^{\leftarrow}(y)$,
$\forall y\in [0,1]$, while for the difference--type prophet inequality we have
$\rd \mu(y)= \rd F^{\leftarrow}(y)$, $y\in [0,1]$.
 \par
Theorem~\ref{th:R-A} enables us to establish minimax results
for the ratio--type and difference--type prophet inequalities.
\begin{corollary}\label{cor:minimax}{\rm \textbf{(Interchange results and minimax)}}
The infinite games (\ref{eq:R*}) and  (\ref{eq:A*}) have solutions,~i.e.,
\begin{align}\label{eq:R-minimax}
  &\inf_{F\in \cF_{[0,\infty)}}\;
  \sup_{\tau \in \sT_{\rm s,r}} \cR_n(\tau;F) =
 \sup_{\tau \in \sT_{\rm s, r}}\inf_{F\in  \cF_{[0,\infty)}} \cR_n(\tau;F),
\\
\label{eq:A-minimax}
 &\sup_{F\in \cF_{[0,1]}}\;
  \inf_{\tau \in \sT_{\rm s, r}}\cA_n(\tau;F) =
 \inf_{\tau \in \sT_{\rm s, r}}\sup_{F\in \cF_{[0,1]}} \cA_n(\tau;F).
 \end{align}
\end{corollary}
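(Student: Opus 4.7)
The plan is to reduce both identities to the classical minimax equality for the bilinear infinite games on $[0,1]^2$ defined by the kernels $R$ and $A$, and then invoke a standard minimax theorem. By Theorem~\ref{th:R-A}, the left-hand sides of \eqref{eq:R-minimax} and \eqref{eq:A-minimax} are already equal to $\inf_\mu\sup_\lambda\bar R(\lambda,\mu)$ and $\sup_\mu\inf_\lambda\bar A(\lambda,\mu)$, respectively. The same change of variables from $(\tau,F)$ to $(\lambda,\mu)$ underlying the proof of Theorem~\ref{th:R-A}---identifying $F$ with $\mu$ through $\rd\mu(y)=(1-y^n)\rd F^{\leftarrow}(y)$ (ratio case) or $\rd\mu(y)=\rd F^{\leftarrow}(y)$ (difference case), and $\tau_p(\theta)\in\sT_{\rm s,r}$ with the quantile level $x=F_p(\theta)\in[0,1]$---applies symmetrically with the outer operations interchanged and yields $\sup_\tau\inf_F \cR_n(\tau;F)=\sup_\lambda\inf_\mu\bar R(\lambda,\mu)$ and $\inf_\tau\sup_F \cA_n(\tau;F)=\inf_\lambda\sup_\mu\bar A(\lambda,\mu)$. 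Thus the corollary reduces to establishing the two minimax equalities for the bilinear games on the unit square.

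To invoke a minimax theorem, I would verify three ingredients. (i) The strategy space $\cF_{[0,1]}$ is convex and, by Banach--Alaoglu/Prohorov, compact in the weak-$\ast$ topology on the dual of $C([0,1])$. (ii) Each of the piecewise-defined kernels in \eqref{eq:R}, \eqref{eq:A} extends continuously to the whole closed square $[0,1]^2$: on the diagonal $x=y$, both branches of $R$ tend to $1$ and both branches of $A$ tend to $0$ after cancelling the common polynomial factor $1+y+\cdots+y^{n-1}$; at the boundary $y=1$, the limit $(1-y)/(1-y^n)\to 1/n$ produces the continuous extensions $R(x,1)=\tfrac{1}{n}\sum_{k=0}^{n-1}x^k$ and $A(x,1)=0$, matching the corresponding values at $(1,1)$. (iii) Since $R$ and $A$ are then bounded and continuous on the compact square, the bilinear functionals $\bar R$ and $\bar A$ are weak-$\ast$ continuous in each argument separately (by dominated convergence applied to test functions) and are trivially concave-convex.

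With (i)--(iii) in place, Sion's minimax theorem---equivalently, the Ky Fan or Glicksberg minimax theorem for continuous payoffs on compact convex strategy sets, as developed for infinite games on the unit square in \citeasnoun{Karlin}---immediately delivers the two required equalities
\[
\inf_\mu\sup_\lambda\bar R(\lambda,\mu)=\sup_\lambda\inf_\mu\bar R(\lambda,\mu),\qquad \sup_\mu\inf_\lambda\bar A(\lambda,\mu)=\inf_\lambda\sup_\mu\bar A(\lambda,\mu).
\]
Combined with the identifications of the first paragraph, this proves \eqref{eq:R-minimax} and \eqref{eq:A-minimax}.

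The main technical obstacle is the continuity verification for the kernels at the boundary $y=1$ (where $1-y^n$ vanishes) and along the diagonal $x=y$ (where the piecewise formulas meet and both yield an indeterminate form); once the continuous extensions are established, the remainder is a routine invocation of the standard minimax theorem for continuous infinite games on a compact strategy space.
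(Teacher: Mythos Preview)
Your overall strategy---reduce to the bilinear games on the unit square via Theorem~\ref{th:R-A} and then invoke a minimax theorem---is exactly the paper's approach, and your treatment of the kernel $A$ is correct.  However, your continuity verification for $R$ contains a genuine error.  You check the limit of $R(x,y)$ at $(1,1)$ only along the diagonal and from the region $x<y$, obtaining $R(x,1)=\tfrac{1}{n}\sum_{k=0}^{n-1}x^k$ and hence the value $1$ at $(1,1)$.  But you omit the approach from the region $x>y$: for $x=1$ and $y<1$ the first branch gives
\[
R(1,y)=\frac{1-y}{1-y^n}\longrightarrow \frac{1}{n}\quad\text{as }y\uparrow 1,
\]
which does \emph{not} match $R(1,1)=1$.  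Thus $R$ is discontinuous at $(1,1)$, and the continuous-payoff minimax theorems you cite (Sion, Ky Fan) do not apply as stated.

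The paper confronts this discontinuity explicitly.  It observes that although $R$ is not continuous at $(1,1)$, it \emph{is} upper semi-continuous there: since $R\le 1$ everywhere and $R(1,1)=1$, one has $\limsup_{(x,y)\to(1,1)}R(x,y)\le R(1,1)$.  This is enough to invoke Glicksberg's minimax theorem for upper/lower semi-continuous payoffs \citeasnoun{Glicksberg}, which yields \eqref{eq:R-minimax}.  Your argument is salvageable by replacing the incorrect continuity claim for $R$ with this semi-continuity observation and citing Glicksberg rather than Sion.
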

The existence of the value of the infinite game (\ref{eq:A*})
follows from continuity of the payoff kernel $A(x,y)$ on $[0,1]\times [0,1]$;
see, e.g., \citeasnoun[Section~4.5]{Kuhn}. Hence \eqref{eq:A-minimax} holds.
The kernel
$R(x,y)$ is positive, bounded above by one, but, in contrast to $A(x,y)$,
it is discontinuous at
$(x,y)=(1,1)$.
Indeed, it follows from (\ref{eq:R}) that
$\lim_{\epsilon\downarrow 0} R(1-\epsilon, 1-\epsilon)=1$, but
$\lim_{\epsilon\downarrow 0} R(1, 1-\epsilon)=1/n$.
However, $R(x,y)$ is upper semi--continuous because for every sequence
$\{(x_m, y_m)\}$ such that $(x_m, y_m)\to (1,1)$ as $m\to\infty$
one has
$\limsup_m R(x_m, y_m) \leq R(1,1)=1$. Therefore
(\ref{eq:R-minimax})
is a consequence of the minimax theorem in \citeasnoun{Glicksberg}.

\par
It is worth mentioning  that
a game--theoretic interpretation
of prophet inequalities has been discussed, e.g., in
\citeasnoun{Schmitz} and
\citeasnoun{Meyer-Schmitz}. For independent random variables
\citeasnoun{Schmitz}
studies the question of existence of
minimax strategies  in games against a
prophet  with ratio--type and difference--type payoff functions, while
\citeasnoun{Meyer-Schmitz} focuses on prophet games for martingales and
general stochastic processes.
Corollary~\ref{cor:minimax} deals with iid random variables, and
single threshold stopping rules;
these results complement the ones in the aforementioned papers.

\section{Computation of sharp constants}\label{s-prophet}

 Theorem~\ref{th:R-A} shows that sharp constants in prophet inequalities
 for  single threshold stopping rules
 are determined by the optimal values
 of the two--person zero--sum infinite games on the unit square
 \eqref{eq:R*} and \eqref{eq:A*}. These games can be
 approximated to  any prescribed
 accuracy
 by finite games which are  efficiently solved using simple computational procedures.
 \par
 Specifically, for $R(x,y)$  and $A(x,y)$
 defined in \eqref{eq:R} and \eqref{eq:A} and integer number
 $N$ define $(N-1)\times (N-1)$ matrices $R_N$ and $A_N$ as follows: 
 $R_N :=  \big\{R(\tfrac{i}{N}, \tfrac{j}{N}): i,j= 1, \ldots, N-1\big\}$, and
  $A_N :=  \big\{A(\tfrac{i}{N}, \tfrac{j}{N}): i,j= 1, \ldots, N-1\big\}$,
i.e.,
\begin{align}\label{eq:R-N}
 [R_N]_{ij} &:=
  \left\{\begin{array}{ll}
\frac{1-\big(\frac{i}{N}\big)^{n-1}\frac{j}{N}}{1-\big(\frac{j}{N}\big)^n}, & 1\leq j\leq i\leq N-1,\\[4mm]
\frac{1-(\frac{i}{N})^n}{1-\frac{i}{N}} \frac{1-\frac{j}{N}}{1-\big(\frac{j}{N}\big)^n} , & 1\leq i<j\leq N-1,
                 \end{array}
\right.
\\[4mm]
[A_N]_{ij} &:=\left\{
 \begin{array}{ll}
(1- \tfrac{j}{N}) \sum_{k=1}^{n-1} \big[(\tfrac{j}{N})^k -(\tfrac{i}{N})^k\big], &  1\leq i \leq j \leq N-1, \\[4mm]
\tfrac{j}{N} \big[(\tfrac{i}{N})^{n-1} - (\tfrac{j}{N})^{n-1}\big], &
1\leq j < i\leq N-1.
 \end{array}
\right.
\label{eq:A-N}
\end{align}
Consider the associated finite matrix games
\begin{align}
\cR_{n, N}^* := \min_\mu \max_\lambda  \Big\{ \lambda^TR_N\mu: \;
\lambda^T {\bf 1} =1,\;
 \mu^T {\bf 1}=1,\; \lambda\geq 0, \;\mu\geq 0\Big\},
\label{eq:matrix-R}
\\[2mm]
 \cA_{n, N}^* := \max_\mu \min_\lambda  \Big\{ \lambda^TA_N\mu: \;
\lambda^T {\bf 1} =1,\;
 \mu^T {\bf 1}=1,\; \lambda\geq 0, \;\mu\geq 0\Big\},
\label{eq:matrix-A}
\end{align}
where ${\bf 1}$ stands for the vector of ones, and  $\lambda$ and $\mu$ correspond to
the stopping strategy and the least favorable distribution respectively.
\par
Our current goal is to establish bounds on the
accuracy of approximation of optimal values
of the inifnite games in \eqref{eq:R*} and \eqref{eq:A*},
$\cR_n^*(\sT_{\rm s, r}; \cF_{[0,\infty)})$ and
$\cA_n^*(\sT_{\rm s, r}; \cF_{[0,1]})$,
by
the matrix games $\cR_{n,N}^*$ and  $\cA_{n, N}^*$ respectively.
First, we show that the optimal values $\cR_{n,N}^*$ and $\cA_{n, N}^*$ of
matrix games \eqref{eq:matrix-R} and \eqref{eq:matrix-A} admit interpretations
as the rewards of optimal single threshold stopping rules
for discrete
distributions of special type.
\begin{theorem}\label{th:DN} {\rm \textbf{ (Discrete approximations) } }
Consider
the family of all discrete distribution functions
 with at most $N$ atoms and probabilities taking values
 in the set $\{i/N: i=1, \ldots, N\}$:
\[
 \cD_N:=\bigg\{F: F(x)=\frac{1}{N}\sum_{i=1}^N
 I\{u_i \leq x\}, \;\;u=(u_1, \ldots, u_N)\in \cK_+^N\bigg\},
\]
where $\cK_+^N=\{x\in \bR^N: 0\leq x_1\leq \cdots \leq x_N\}$.
Then
$\cR_{n, N}^*=\cR_n^*\big(\sT_{\rm s, r};  \cD_N\big)$,
$\cA_{n, N}^*=\cA_n^*\big(\sT_{\rm s, r}; \cD_N\big)$.
\end{theorem}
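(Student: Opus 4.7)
The plan is to parameterize each $F\in\cD_N$ by its atom gaps, express $M_n(F)$ and $V_n(\tau_0(u_k);F)$ as linear forms in these gaps whose coefficients, after normalization, are exactly the entries of $R_N$ and $A_N$, and then invoke the finite minimax theorem to identify the matrix game values with the worst--case ratio/regret over $\cD_N$. Concretely, for $F\in\cD_N$ with atoms $0\le u_1\le\cdots\le u_N$ set $w_0:=u_1$ and $w_j:=u_{j+1}-u_j$ for $j=1,\dots,N-1$; integrating piecewise over $[u_j,u_{j+1})$ and applying \eqref{eq:iid-1} with $p=0$ at threshold $\theta=u_k$ yields
\begin{equation*}
M_n(F)=\sum_{j=0}^{N-1} m_j w_j,\qquad V_n(\tau_0(u_k);F)=\sum_{j=0}^{N-1} C_j^{(k)} w_j,
\end{equation*}
where $m_0:=1$, $m_j:=1-(j/N)^n$ for $j\ge 1$, and
\begin{equation*}
C_j^{(k)}=\begin{cases}1-(k/N)^{n-1}(j/N), & j\le k,\\[1mm] (N-j)\Bigl\{\dfrac{1-(k/N)^{n-1}}{N-k}+\dfrac{(k/N)^{n-1}}{N}\Bigr\}, & j>k.\end{cases}
\end{equation*}

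A short algebraic manipulation -- the $j>k$ branch collapsing via $(N^n-k^n)/[N^n(1-k/N)]=(1-(k/N)^n)/[N(1-k/N)]$, together with the geometric-series identity $(1-x^n)/(1-x)=\sum_{\ell=0}^{n-1}x^\ell$ -- yields the key identities
\begin{equation*}
\frac{C_j^{(k)}}{m_j}=[R_N]_{kj},\qquad m_j-C_j^{(k)}=[A_N]_{kj},\qquad k,j\in\{1,\dots,N-1\}.
\end{equation*}
Next I reduce to $w_0=0$. For the ratio, $C_0^{(k)}/m_0=1$, so increasing $w_0$ only pulls $V_n/M_n$ toward $1$ and the infimum over $\cD_N$ is attained at $u_1=0$. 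For the regret, a uniform shift $u_i\mapsto u_i+c$ leaves $M_n-V_n$ unchanged, so again $u_1=0$ is WLOG; moreover, since regret is linear in the $u_i$, under the implicit constraint $u_N\le 1$ (needed so $\cD_N\subset\cF_{[0,1]}$ and $\cA_n^*(\sT_{\rm s};\cD_N)<\infty$) the supremum is attained at $u_N=1$, i.e., $\sum_{j\ge 1}w_j=1$.

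With $w_0=0$ impose the change of variables $\bar w_j:=m_j w_j/M_n(F)$ for the ratio problem and keep $w_j$ itself for the regret problem; in either case $\bar w$ is a probability vector on $\{1,\dots,N-1\}$, and
\begin{equation*}
\cR_n(\tau_0(u_k);F)=(R_N\bar w)_k,\qquad \cA_n(\tau_0(u_k);F)=(A_N w)_k.
\end{equation*}
Taking $\sup_k$ (resp.\ $\inf_k$) and then $\inf_{\bar w}$ (resp.\ $\sup_w$), and using that $\max_\lambda\lambda^T R_N\mu=\max_k(R_N\mu)_k$ for probability $\lambda$, the minimax theorem for finite matrix games gives $\cR_n^*(\sT_{\rm s};\cD_N)=\cR_{n,N}^*$ and, analogously, $\cA_n^*(\sT_{\rm s};\cD_N)=\cA_{n,N}^*$.

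The main obstacle is the piecewise case analysis underlying the identification of $C_j^{(k)}/m_j$ with $[R_N]_{kj}$: the $j\le k$ branch is immediate, but the $j>k$ branch requires the nontrivial rearrangement above, which is easy to botch. A secondary subtlety is that if $F\in\cD_N$ has tied atoms then the set of achievable thresholds is a strict subset of $\{1,\dots,N-1\}$, so the identification $\sup_{\tau\in\sT_{\rm s}}\cR_n(\tau;F)=\max_k(R_N\bar w)_k$ needs a density argument: tie-free $F$'s are dense in $\cD_N$ and the map $u\mapsto(V_n,M_n)$ is continuous, so the extremum is unchanged. Boundary thresholds $k=0$ (always stop at $t=1$) and $k=N$ (never stop) correspond to pure strategies dominated by interior $k$ and may be safely discarded.
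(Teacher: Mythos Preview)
Your proof is correct and follows essentially the same route as the paper: parameterize $F\in\cD_N$ by the atom gaps, express $M_n(F)$ and $V_n(\tau_0(u_k);F)$ as linear forms in these gaps, check that the resulting coefficients (after normalizing by $m_j$ or subtracting from $m_j$) coincide with the entries of $R_N$ and $A_N$, and then read off the matrix--game values \eqref{eq:matrix-R}--\eqref{eq:matrix-A}. Your treatment is in fact slightly more careful than the paper's on two points---the explicit reduction $w_0=u_1=0$ and the density argument for tied atoms---which the paper glosses over; note also that the appeal to the finite minimax theorem is not actually needed, since $\cR_{n,N}^*$ and $\cA_{n,N}^*$ are already defined in \eqref{eq:matrix-R}--\eqref{eq:matrix-A} with the order of $\min_\mu\max_\lambda$ (resp.\ $\max_\mu\min_\lambda$) matching the prophet formulation directly.
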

Theorem~\ref{th:DN} shows that the optimal values of
the finite matrix games in
\eqref{eq:matrix-R} and \eqref{eq:matrix-A} provide sharp
constants in the prophet inequalities for stopping rules in $\sT_{\rm s,r}$ and
the class of distributions $\cD_N$.
It is worth mentioning that
for fixed $n$ and any $N_1$ and $N_2$ such that
$N_1>N_2$ and $\cD_{N_1}\subset \cD_{N_2}$ we have that
$\cR_{n, N_1}^* \leq \cR^*_{n, N_2}$ and $\cA^*_{n, N_1}\geq \cA^*_{n, N_2}$.
The proof of Theorem~\ref{th:DN} also demonstrates that the least favorable distributions
from $\cD_N$ are fully determined by $N-1$ numbers $v_j$, $j=1, \ldots, N-1$ which are differences between the subsequent
$(i/N)$--quantiles of $F\in \cD_N$, $v_i:=u_{i+1}-u_i$, $i=1,\ldots, N-1$.
\par
The next statement provides
bounds on
$\cR_{n}^*(\sT_{\rm s, r}; \cF_{[0,\infty)})$ and
$\cA_{n}^*(\sT_{\rm s, r}; \cF_{[0,1]})$
in terms of the optimal values of the
matrix games \eqref{eq:matrix-R} and \eqref{eq:matrix-A}.
\begin{theorem}\label{th:approximation} {\rm \textbf{ (Bounds for discrete approximations) } }
\begin{itemize}
 \item[{\rm (i)}]  Let
 $\Delta_A:= \frac{n-1}{2N}$; then
 for any $n\geq 2$ one has
 \begin{equation}\label{eq:A-2-sided}
  \cA_{n, N}^*  \;\leq\;
  \cA_n^*\big(\sT_{\rm s, r}; \cF_{[0,1]}\big) \;\leq \; \cA_{n,N}^*
  + \Delta_A.
 \end{equation}
 \item[{\rm (ii)}]
 Let $\Delta_R:= \frac{n-1}{2N} [(1-e^{-1})^2-\frac{1}{n-1}]^{-1}$; then
 for any $n\geq 4$ one has
\begin{equation}\label{eq:R-2-sided}
 \cR_{n, N}^* - \Delta_R
 \leq \cR^*_n\big(\sT_{\rm s,r}; \cF_{[0,\infty)} \big)  \leq  \cR_{n, N}^*.
\end{equation}
\end{itemize}
\end{theorem}
The theorem shows that sharp non--asymptotic
constants in prophet inequalities for single threshold rules
in $\sT_{\rm s,r}$
can be computed
with any prescribed accuracy by solution of the finite games in
\eqref{eq:matrix-R}--\eqref{eq:matrix-A}.
For any horizon $n$, choosing sufficiently large $N$,
we can achieve desired precision in computation of the sharp constants.
In particular, the bounds of Theorem~\ref{th:approximation}
show that in order to achieve fixed precision $\epsilon$, the discretization parameter $N$ should grow as $n$ increases.
\par
Note also that the left inequality in \eqref{eq:A-2-sided}
and right inequality in \eqref{eq:R-2-sided} follow straightforwardly from
Theorem~\ref{th:DN}.
According to this theorem
the developed discrete approximation corresponds to the
single threshold
stopping rules in $\sT_{\rm s, r}$. 
It is useful to note that the finite matrix games
\eqref{eq:matrix-R}--\eqref{eq:matrix-A}, that yield the bounds on the optimal constants in \eqref{eq:A-2-sided} and \eqref{eq:R-2-sided}, can be expressed as 
linear programs which are efficiently solved using standard computational tools.
We provide the details below.
\subsection{Ratio--type prophet inequalities}
To compute the sharp constants in ratio--type prophet inequalities
we solve the game \eqref{eq:matrix-R} as follows.
Let $r_i^T$ denote the $i$th row of matrix $R_N$ 
defined in \eqref{eq:R-N}, i.e., $R_N=[r_{1}^T;
r_{2}^T;\cdots;r_{N-1}^T]$. Then
the problem \eqref{eq:matrix-R}
is equivalent~to
\[
 \begin{array}{ll}
  {\displaystyle \min_\mu}  & {\displaystyle  \max_{i=1, \ldots, N-1}\; r_i^T\mu}
  \\*[2mm]
  {\rm s.t.} & \mu^T{\bf 1}=1
  \\*[1mm]
   & \mu \geq 0
 \end{array}
\]
which  is directly cast as a linear optimization problem
\begin{equation} \label{eq:R-opt}
\tag{R}
\begin{array}{ll}
  {\displaystyle \min_{\mu, t}} & t\\[1mm]
  {\rm s.t.} & r_i^T\mu \leq t,\;\;\;i=1, \ldots, N-1\\[1mm]
             & \mu^T {\bf 1}  = 1\\[1mm]
             & \mu\geq 0.
 \end{array}
\end{equation}
If $(\mu^*, t^*)$ is the optimal solution of
\eqref{eq:R-opt}
then the value of the matrix game \eqref{eq:matrix-R} is
$\cR_{n, N}^*=t^*$, and a least favorable distribution
has at most $N$ atoms with $(i/N)$--quantiles $\{u^*_i, i=1, \ldots, N\}$
which are determined by the relationship
\[
u_1^*=0,\;\;\;\;u_{i+1}^*=\sum_{j=1}^i \frac{\mu_j^*}{1-(\frac{j}{N})^n},\;\;\;i=1,\ldots, N-1.
\]
Then the optimal stopping rule 
is associated with the threshold $\theta^*=u_{i^*+1}$, where $i^*$ satisfies
$r_{i_*}^T \mu^*=t^*$.
\par
We solve problem \eqref{eq:R-opt} for different values of  horizon
$n\in \{10, 25, 50, 100, 300, 500,
1000, 2000, 3000\}$ with fixed discretization parameter,
$N=13500$\footnote{This choice of $N$ is dictated by the
computer power limitations. The computations were performed on a
laptop with 32GB RAM and i7 11th generation processor.},
using {\tt Mosek} optimization solver \citeasnoun{Mosek}.
The results
are reported in the second and third columns of Table~\ref{tab:1}. The second
column presents the values $\cR_{n, N}^*$, while
the third column gives  the lower bound $\underline{\cR}_{n, N}^*$
which is the maximum of the lower bound of Theorem~\ref{th:approximation}
and $1- (1-\frac{1}{n})^n$, given by the prophet inequality  \eqref{eq:cor-11}:
\begin{align*}
 &\underline{\cR}_{n, N}^* = \max\Big\{ 1- \Big(1-\frac{1}{n}\Big)^n,\;\;
 \cR_{n, N}^* - \Delta_R\Big\}.
\end{align*}
\par
Accuracy of the presented bounds deteriorates as $n$ grows;
computation of
more accurate bounds requires selecting  much bigger values of $N$.
For instance, in order to compute the sharp constant in the prophet inequality
for $n=100$ with accuracy $10^{-3}$, it is required to have
$N > 1.25\cdot 10^5$,
which leads to a linear program with non--sparse matrices
of dimensionality of several hundred thousands.
\par
From the numbers presented in the second and third columns of Table~\ref{tab:1}
some very useful conclusions on sharp constants can be reliably drawn.
For instance, if $n=25$
then 
$
0.651 \leq \cR^*_{25} \big(\sT_{\rm s, r}; \cF_{[0,\infty)}\big)
\leq 0.654$.

\begin{table}
\begin{center}
 \begin{tabular}{|l||l|c||l|c|}  \hline
 &&&&
 \\[-3mm]
  $n$ & $\cR_{n, N}^*$ & $ \underline{\cR}_{n,N}^*$ & $\cA_{n, N}^*$
  & $\cA_{n,N}^*+\Delta_A$\\ \hline
  10 & 0.6698  &  0.669 & 0.1395 &  0.140 \\
  25 & 0.6540  &  0.651
  & 0.1572 & 0.158\\
  50 & 0.6458 & 0.641
  &0.1644 &  0.166\\
  75 &  0.6427&  0.636
  & 0.1671 & 0.170\\
  100 & 0.6411 &  0.634
  &0.1699 & 0.172\\
  200& 0.6392  &  0.633
  & 0.1708 & 0.178\\
  500 & 0.6409  &  0.632
  & 0.1723& 0.191\\
  1000  &0.6468& 0.632
  &0.1729 & 0.211\\
  2000 &0.6595 &  0.632
  &0.1733 & 0.250\\
  3000 & 0.6726  & 0.632
  &0.1731 & 0.288
  \\
 \hline
 \end{tabular}
\end{center}
\caption{Optimal values $\cR^*_{n, N}$ and $\cA_{n, N}^*$
of problems (R) and (A) and bounds on $\cR^*_n(\sT_{\rm s, r};\cF_{[0,\infty)})$ and $\cA_{n}^*(\sT_{\rm s,r}; \cF_{[0,1]})$
for different values of $n$, where $N=13500$ for problem \eqref{eq:R-opt}
and $N=13000$ for problem \eqref{eq:A-opt}.\label{tab:1}}
\end{table}
\subsection{Difference-type prophet inequalities}
The sharp constants for the difference--type prophet inequality
are obtained as solution of the finite matrix game \eqref{eq:matrix-A}
with payoff matrix $A_N$ defined in \eqref{eq:A-N}.
If $a_i^T$ denotes the $i$th row of matrix $A_N$, i.e.,
$A_N=[a_{1}^T; a_{2}^T;\ldots;a_{N-1}^T]$, then \eqref{eq:matrix-A} is equivalent to
the  linear program
\begin{equation}\label{eq:A-opt}
 \tag{A}
\begin{array}{ll}
 {\displaystyle \max_{\mu, t}}  & \;\;\;\;\;t
 \\[1mm]
 {\rm s.t.} & \;\;\;\;\;a_i^T\mu \geq t,\;\;\;i=1, \ldots, N-1
 \\[1mm]
 &\;\;\;\;\;\mu^T{\bf 1}= 1
 \\[1mm]
 & \;\;\;\;\;\mu\geq 0.
\end{array}
\end{equation}
If $(\mu^*, t^*)$ is the optimal solution of \eqref{eq:A-opt} then
$\cA_{n, N}^*=t^*$, and the least favorable distribution $F_*$ is the discrete
distribution with $i/N$--quantiles $u^*_i$ determined by
$u_1^*=0$, $u^*_{i+1} = \sum_{j=1}^i \mu_j^*$, $i=1,\ldots, N-1$.
\par
The problem \eqref{eq:A-opt} is solved for
$n\in \{10, 25, 50, 100, 300, 500, 1000, 2000, 3000\}$, and $N=13000$. The last two columns in Table~\ref{tab:1} contain the optimal value $\cA_{n, N}^*$,  and
$\cA_{n, N}^*+ \Delta_A$,
which are the lower and upper bounds on the sharp constant
$\cA_n^*\big(\sT_{\rm s, r}; \cF_{[0,1]}\big)$ in the difference--type prophet inequality.
To the best of our knowledge, the existing literature does not contain prophet inequalities for
single threshold stopping rules in the iid setting. However, the numbers
in the last two columns on Table~\ref{tab:1} can be
compared to the bounds on the values
$\cA_n^*(\sT_{\rm all}; \cF_{[0,1]})$
reported in \citeasnoun{Hill-Kerz-1} for the class of
all possible stopping rules:
$\cA_n^*(\sT_{\rm all}; \cF_{[0,1]}) \leq b_n$ with $b_{10,000}\approx 0.111$.
The inferiority the optimal single threshold stopping rule
in comparison with the optimal stopping rule is rather moderate: e.g., for $n=200$ the regret
increases from $0.11$ to $0.17$.

\section{Prophet inequalities for restricted families of distributions}
\label{s-num}

The proofs of Theorems~\ref{th:R-A} and~\ref{th:DN}
heavily exploit the fact that both
the expected value of the maximum, $M_n(F)=\bE\max_{1\leq t\leq n} X_t$, and
the reward $V_n[\tau_{p_x}(\theta_x); F]$ of a single threshold stopping rule with associated
parameters $\theta_x$ and $p_x$,
are linear functionals of the quantile function
$F^\leftarrow$. In particular, we show that
\[
V_n(\tau_{p_x}(\theta_x); F)  =
\int_{0}^1 \Big[(1-x^{n-1})\min\big\{1, \tfrac{1-y}{1-x}\big\} +
  x^{n-1}
  (1-y)\Big] \rd F^\leftarrow(y),
\]
and
\[
 M_n(F)= \bE \max_{1\leq t\leq n} X_t = \int_{0}^{1} (1-y^n) \rd F^\leftarrow(y).
\]
\par
Another step in our derivation  of the sharp constants is the approximation of $F$ by the class of discrete distributions  $\cD_N$.
This allows one to reduce infinite dimensional optimization problems to finite--dimensional ones and
to efficiently solve them  on a computer.
It is worth noting that
any distribution function $F$ can be approximated in the $\bL_\infty$--norm
by a function
from $\cD_N$ with accuracy $1/N$. Since the value  of $N$
can be chosen (at least, theoretically)
as large as wished,  the sharp constants
in prophet inequalities can be computed to any prescribed level of accuracy.
\par
Specifically,
the proof of Theorem~\ref{th:DN} demonstrates
that for $F\in\cD_N$ one has
\[
 M_n(F)= d^T v,\;\;d=[d_1; \ldots; d_{N-1}],\;\;d_j = 1- \big(\tfrac{j}{N}\big)^n,\;\;j= 1, \ldots, N-1,
\]
and $V_n(\tau_{p_i}(u_i); F) = b_i^T v$, where $p_i:= (F(u_i)-i/N)(F(u_i)-F(u_i-))$,
 $b_i=[b_{i,1};\ldots;b_{i,N-1}]$,
\[
 b_{i,j}=\left\{\begin{array}{ll}
1-\big(\frac{i}{N}\big)^{n-1}\frac{j}{N}, & 1\leq j\leq i\leq N-1,\\[2mm]
\frac{1-(\frac{i}{N})^n}{1-\frac{i}{N}} (1-\frac{j}{N}), & 1\leq i<j\leq N-1,
                 \end{array}
\right.
\]
and  $v=[v_1;\ldots,v_{N-1}]$, $u_1=0$, $v_j=u_{j+1}-u_{j}$, $j=1, \ldots, N-1$ with
$u_j$'s being the $(j/N)$--quantiles of $F$.
Then the derivation of the ratio--type and different--type
prophet inequalities
are formulated
as optimization problems
\begin{equation}\label{eq:problem-R1}
 \begin{array}{cl}
 {\displaystyle \min_v} & \;\;\;\;{\displaystyle
 \max_{i=1, \ldots, N-1} \;\;\frac{b_i^T v}{d^Tv}}
 \\[4mm]
 {\rm s.t.}
 & \;\;\;\;\;\;\;\;\;v\geq 0.
 \end{array}
 \end{equation}
 and
 \begin{equation}
 \label{eq:problem-A1}
\begin{array}{cl}
 {\displaystyle \max_v} & \;\;\;\;
 {\displaystyle \min_{i=1, \ldots, N-1}}\;\; [d^Tv - b_i^T v]
 \\[4mm]
 {\rm s.t.} & \;\;\;\;{\bf 1}^T v =1
 \\
 & \;\;\;\;v\geq 0.
 \end{array}
 \end{equation}
 It is readily seen that \eqref{eq:problem-R1} and \eqref{eq:problem-A1} are equivalent to
 \eqref{eq:R-opt} and \eqref{eq:A-opt} respectively.
These problems are reduced to
linear programs that are efficiently solved on a computer.
The corresponding optimal values provide
approximate sharp constants with accuracy guarantees  given
in Theorem~\ref{th:approximation}.
\par
What is perhaps more important is that
the proposed approach can be used in order
to compute approximate sharp prophet inequalities for restricted families
of distributions, such as distributions with bounded variance, second moment, or
other constraints on the tail behavior. In fact, prophet inequalities under
any condition on the quantile function that results
in
convex constraints  in
\eqref{eq:problem-R1}--\eqref{eq:problem-A1}
can be efficiently computed.
We illustrate this fact in the following two examples.
\paragraph{Difference--type  prophet inequality for distributions with bounded variance.}
Let $\cF_\sigma$ be the class of  distribution functions on $[0,\infty)$
with variance bounded by constant $\sigma^2<\infty$,
\[
\cF_\sigma:=\Big\{F\in \cF_{[0,\infty)}:
{\rm var}(X_i) = \int_{(0,1)}\int_{(0,1)} \big[ x\wedge y -xy\big] \rd F^{\leftarrow}(x)\rd F^{\leftarrow}(y) \leq \sigma^2\Big\}.
\]
If $F\in \cF_\sigma \cap \cD_N$ then the condition
${\rm var}(X_i)\leq \sigma^2$ is equivalent to
\[
 \sum_{i=1}^{N-1}\sum_{j=1}^{N-1} \Big(\frac{i}{N}\wedge \frac{j}{N} -
 \frac{ij}{N^2}\Big) (u_{i+1}-u_{i})(u_{j+1}-u_{j}) = v^T Qv,
\]
where, $v=[v_1;\ldots;v_{N-1}]$ with $v_i=u_{i+1}-u_{i}$, and
$Q_{ij}= \frac{i \wedge j}{N} - \frac{ij}{N^2}$, $i,j=1, \ldots, N-1$.
\par
The problem \eqref{eq:problem-R1} associated with the ratio--type
prophet inequality
is scale invariant in the sense that
its optimal solution is defined up to a scale parameter. Therefore the optimal
value  does not change
when an upper bound on variance is added.
 The same is true when any one--sided linear constraint on $v$ is imposed.
Therefore we discuss only the difference--type prophet
inequality for the family
$\cF_\sigma\cap \cD_N$.
\par
In this situation
in
the problem \eqref{eq:problem-A1}
the constraint ${\bf 1}^Tv=1$
should be  replaced
by the quadratic constraint $v^TQv\leq \sigma^2$:
\begin{equation*}
\begin{array}{ll}
 {\displaystyle \max_v} & \;\;\;\;
 {\displaystyle \min_{i=1, \ldots, N-1}} [d^Tv - b_i^T v]
 \\[4mm]
 {\rm s.t.} & \;\;\;\;v^TQv \leq \sigma^2
 \\
 & \;\;\;\;v\geq 0.
 \end{array}
 \end{equation*}
The optimal value of this problem is
$\cA_n^*(\sT_{\rm s}; \cF_\sigma\cap
\cD_N) = \kappa_n\sigma$, where $\kappa_n$ is given by
\begin{align}
  \kappa_n:=
  \max_{z,t}\Big\{t: ({\bf 1} d^T - B)z - t{\bf 1}\geq 0,\;
 \|Q^{1/2} z\|_2 \leq 1,\;z\geq 0,\;t\geq 0\Big\},
 \label{eq:cn}
\end{align}
where $B=\{b_{i,j}\}$, $i,j=1, \ldots, N-1$.
Table~\ref{tab:2} displays  values of $\kappa_n$ for
$n\in \{10, 25, 50, 75, 100, 250, 500, 750, 1000\}$
obtained by solving optimization problem in \eqref{eq:cn}
for $N=7000$; we used {\tt CVX} package for specifying and solving convex programs
\citeasnoun{CVX}
together
with the {\tt Mosek} optimization  solver.
\par
It is seen that the sequence $\{\kappa_n\}$ increases with $n$.
The growth of $\{\kappa_n\}$ can be compared with results of
\citeasnoun{Kennedy-Kertz}, who
considered the setting of independent random variables, $F^{(n)}=\prod_{i=1}^nF_i$,  with marginal distributions $F_i$'s having bounded variance,
${\rm var}\{X_i\}\leq \sigma^2$, $i=1, \ldots, n$.
They
established
an upper bound on the worst--case
regret of the optimal stopping rules; specifically
\[
 \cA_n^*(\sT_{\rm all}; \cF_\sigma^{(n)}) = \sup_{F^{(n)}\in \cF_\sigma^{(n)}}
 \cA_n(\sT_{\rm all}; F^{(n)}) \leq c_n \sigma \sqrt{n-1},
\]
where $c_n\leq 1/2$, $\liminf_n c_n\geq \sqrt{\ln 2- 1/2}\approx 0.439$.
Since the setting of the iid random variables is a specific case,
we also have $\cA^*_n(\sT_{\rm all}; \cF_\sigma)\leq c_n \sigma \sqrt{n-1}$.
Note that  for large $N$, the value
$\cA^*_n(\sT_{\rm s, r}; \cF_\sigma\cap \cD_N)$ approximates
$\cA^*_n(\sT_{\rm s, r}; \cF_\sigma)$. For the data in
Table~\ref{tab:2} we have $\max_{n}\{\kappa_n/c_n\sqrt{n-1}\} \leq 0.451$, where
the maximum is taken over $n\in\{10, 25, 50, 75, 100, 250, 500, 750, 1000\}$.
Thus, the upper bound of \citeasnoun{Kennedy-Kertz} is at least twice higher than
the actual worst--case regret of the optimal single threshold
stopping rule on the class $\cF_\sigma\cap \cD_N$ with the values of $n$ and $N$ indicated above. Note, however, that  \citeasnoun{Kennedy-Kertz}
consider more general setting of independent random variables, and
do not make statements on sharpness of the derived prophet inequality.

\begin{table}
\begin{center}
 \begin{tabular}{|c|ccccccccc|} \hline
$n$ & 10& 25 & 50 & 75 &100 &250 & 500 & 750& 1000 \\ \hline
$\kappa_n$ &0.594 & 0.957 & 1.361 & 1.670 & 1.930&3.056 & 4.319& 5.280& 6.082 \\
\hline
 \end{tabular}
 \end{center}
 \caption{The values of $\kappa_n$ in \eqref{eq:cn} as a function of $n$
 \label{tab:2}.}
\end{table}

\paragraph{Ratio--type prophet inequality for Pareto--like distributions.}
Let $1<p_1<p_0$ be real numbers and consider the family of distributions
\[
 \cF(p_0,p_1)=\Big\{F\in \cF_{[1,\infty)}: 1-x^{-p_1}\leq F(x)\leq 1-x^{-p_0},\;\;
 \forall x\in [1, \infty)\Big\}.
\]
If $F\in \cF(p_0,p_1)$ then $(1-t)^{-1/p_0}\leq F^\leftarrow(t)\leq (1-t)^{-1/p_1}$,
and for $F\in \cF (p_0,p_1)\cap \cD_N$ we have
\begin{equation}\label{eq:condition-Pareto}
   q_{1,i}:=\Big(\frac{N}{N-i}\Big)^{1/p_0} \leq u_i\leq \Big(\frac{N}{N-i}\Big)^{1/p_1}=:q_{0,i},\;\;\;i=1,\ldots, N-1.
\end{equation}
The condition in the definition of $\cF(p_0,p_1)$
imposes restrictions on the distribution tail. Larger values of $p_1$ result
in lighter distribution tails, and it is expected that in such situation
the worst--case competitive ratio of the single threshold stopping
rules will be closer to one.
\par
The optimization problem associated with the family $\cF(p_0,p_1)\cap \cD_N$ takes the
form
\begin{equation}\label{eq:R1-pareto}
 \begin{array}{ll}
 {\displaystyle \min_v} & \;\;\;\;{\displaystyle
 \max_{i=1, \ldots, N-1} \;\;\frac{b_i^T v}{d^Tv}}
 \\[4mm]
 {\rm s.t.}
 & \;\;\;\;\;\;\;\;\;q_1 \leq Qv \leq q_0
 \\
 & \;\;\;\;\;\;\;\;\;v\geq 0,
 \end{array}
 \end{equation}
where $Q$ is the lower triangular
matrix with all entries equal to one, and
$q_0=[q_{0,1};\ldots; q_{0, N-1}]$ and
$q_1=[q_{1,1};\ldots; q_{1, N-1}]$; see \eqref{eq:condition-Pareto}.
The optimal values
$\cR^*_{n, N}(p_0,p_1):=\cR_n^*(\sT_{\rm s}; \cF(p_0,p_1)\cap \cD_N)$
of \eqref{eq:R1-pareto} for $p_1=5$, $p_0=20$, $n\in \{10,25,50,75,100,250,
500,750,1000\}$ computed with $N=7000$ are presented in Table~\ref{tab:3}.
These values can be compared with values of $\cR_{n, N}^*$
in Table~\ref{tab:1}. As expected, $\cR^*_{n, N}(p_0,p_1)>\cR_{n, N}^*$
because
the family of considered distributions is narrower.
Note  also that the values of $\cR^*_{n, N}(p_0,p_1)$
decrease as $n$ increases.
\begin{table}
\begin{center}
 \begin{tabular}{|c|ccccccccc|} \hline
$n$ & 10& 25 & 50 & 75 &100 &250 & 500 & 750& 1000 \\ \hline
 $\cR^*_{n, N}(p_0,p_1)$
&  0.897 & 0.865   &  0.846  & 0.837
 &0.831 & 0.815 &0.806 & 0.802 & 0.799   \\
\hline
 \end{tabular}
 \end{center}
 \caption{The optimal
 values $\cR_n^*(p_0,p_1)$ of \eqref{eq:R1-pareto} as a function of $n$
 for $p_1=5$, $p_0=20$ and $N=7000$.
 \label{tab:3}}
\end{table}
\par
In general, sharp constants in ratio--type and difference--type prophet
inequalities for single threshold rules can be efficiently computed
for a variety of different families of distributions.

\section{Proofs of main results} \label{sec:proofs}
\subsection{Proof of Theorem~\ref{th:prophet}}\label{sec:proof-of-Th1}
Let $\theta\geq  0$ be a fixed real number, $p\in [0,1]$,
and  consider  the  stopping rule
$\tau_p(\theta)$ defined in (\ref{eq:stop-random}). In the subsequent proof by convention we set  $\prod_{j=1}^0  =1$.
\par
 We have
 \allowdisplaybreaks
 \begin{align*}
  \bE X_{\tau_p(\theta)} & = \bE \sum_{t=1}^{n-1} X_t \big[ {\bf 1}(X_t>\theta)+
  {\bf 1}(X_t=\theta, \xi_t=1)\big] \prod_{j=1}^{t-1}
    \big[ {\bf 1}(X_j<\theta)+
  {\bf 1}(X_j=\theta, \xi_j=0)\big]
  \\
 &\;\;\;\hspace{65mm} +\; \bE X_n \prod_{j=1}^{n-1}
    \big[ {\bf 1}(X_j<\theta)+
  {\bf 1}(X_j=\theta, \xi_j=0)\big]
  \\
  &= \big[\bE X_t {\bf 1}(X_t>\theta) +
  \theta p \Delta(\theta)
  \big]
  \sum_{t=1}^{n-1}\big[ F(\theta-)+ (1-p)\Delta(\theta)\big]^{t-1}
  +
  \bE X_n \big[F(\theta-)+ (1-p)\Delta(\theta) \big]^{n-1}
  \\
  &=\big[\bE X_t {\bf 1}(X_t>\theta) +
  \theta p \Delta(\theta)\big] \frac{1- F_p^{n-1}(\theta)}{1-F_p(\theta)}
  +  F_p^{n-1}(\theta) \bE X_n
  \\[2mm]
  &= \big[\theta(1-F_p(\theta)) + \int_\theta^\infty [1-F(x)]\rd x \big]\frac{1- F_p^{n-1}(\theta)}{1-F_p(\theta)}
  +  F_p^{n-1}(\theta) \bE X_n,
 \end{align*}
and  \eqref{eq:iid-1} follows because $\int_{\theta}^\infty [1-F(x)]\rd x=
\int_{\theta}^\infty
[1-F_p(x)]\rd x$. Thus, \eqref{eq:iid-1} is proved.
\par
Furthermore,
 \begin{align*}
  \bE X_{\tau_p(\theta)}  &= \bE \sum_{t=1}^{n} X_t \big[ {\bf 1}(X_t>\theta)+
  {\bf 1}(X_t=\theta, \xi_t=1)\big] \prod_{j=1}^{t-1}
    \big[ {\bf 1}(X_j<\theta)+
  {\bf 1}(X_j=\theta, \xi_j=0)\big]
  \\
  &\;\;\;+\; \bE X_n \prod_{j=1}^{n}
    \big[ {\bf 1}(X_j<\theta)+
  {\bf 1}(X_j=\theta, \xi_j=0)\big]
  \\
&= \big[\bE X_t {\bf 1}(X_t>\theta)  + \theta p \Delta(\theta)\big]
  \sum_{t=1}^n [F(\theta-) + (1-p)\Delta(\theta)]^{t-1}
\\
 &\;\;\; + \big[\bE X_n {\bf 1}\{X_n<\theta\} + \theta
  (1-p)\Delta(\theta)\big][F(\theta-) + (1-p)\Delta(\theta)]^{n-1}
 \\[2mm]
  &=\Big\{\theta(1-F_p(\theta)) + \int_\theta^\infty [1-F(x)]\rd x \Big\}\frac{1- F_p^{n}(\theta)}{1-F_p(\theta)}
  + F_p^{n-1}(\theta) \Big\{\int_{[0,\theta]} x\rd F(x) -p\theta \Delta(\theta)\Big\}.
 \end{align*}
This completes the proof of \eqref{eq:iid}.

\subsection{Proof of Theorem~\ref{th:R-A}}
Let $F\in \cF_{[0,\infty)}$ be a fixed distribution, and for fixed $\theta\geq 0$ and $p\in [0,1]$ let
$F_p(\theta)$ be defined in~\eqref{eq:Fp}.
For any $x\in [0,1]$ there exists
pair $(\theta_x, p_x)\in [0,\infty]\times [0,1]$ such that $F_{p_x}(\theta_x)=x$. Indeed, by definition of $F_p(\theta)$,
\begin{itemize}
\item[(a)] if $\theta_x:= F^{\leftarrow}(x)$ and $F(\theta_x)=x$ then for any $p_x\in [0,1]$ one has 
$F_{p_x}(\theta_x)=x$;
\item[(b)] if $\theta_x:=F^{\leftarrow}(x)$ and $F(\theta_x)>x$ then for 
$p_x:= (F(\theta_x)-x)/(F(\theta_x)-F(\theta_x-))$ one has $F_{p_x}(\theta_x)=x$. 
\end{itemize}
 Then,
 according to  \eqref{eq:iid-1}, the reward of $\tau_{p_x}(\theta_x)$ is given~by
 \begin{align}
  V_n(\tau_{p_x}(\theta_x); F)&= (1-x^{n-1}) \Big[F^\leftarrow(x)+\tfrac{1}{1-x}\int_{F^\leftarrow(x)}^\infty [1-F(t)]\rd t\Big] + x^{n-1}\int_0^\infty [1-F(x)]\rd x
  \nonumber
  \\
  &= (1-x^{n-1}) \Big[F^\leftarrow(x)+\tfrac{1}{1-x} \int_{x}^1 (1-y) \rd F^\leftarrow(y)\Big]+
  x^{n-1}\int_{0}^1 (1-y)\rd F^\leftarrow(y)
 \nonumber
 \\
  & = \int_{0}^1 \Big[(1-x^{n-1})\min\big\{1, \tfrac{1-y}{1-x}\big\} +
  x^{n-1}
  (1-y)\Big] \rd F^\leftarrow(y).
\label{eq:V-x}
  \end{align}
Furthermore,
\[
 M_n(F)= \bE \max_{1\leq t\leq n} X_t = \int_{0}^{1} (1-y^n) \rd F^\leftarrow(y).
\]
Therefore the constant in the  sharp ratio--type prophet inequality is equal
to the optimal value of the following optimization problem
\begin{equation}\label{eq:problem}
 \cR^*_n (\sT_{\rm s,r}; \cF_{[0,\infty)}) =\inf_{F^\leftarrow}
 \sup_{0\leq x\leq 1}
 \frac{\int_{0}^{1} \Big[(1-x^{n-1})\min\big\{1, \tfrac{1-y}{1-x}\big\} +
  x^{n-1}
  (1-y)\Big] \rd F^\leftarrow(y)}
  {\int_{0}^{1} (1-y^n) \rd F^\leftarrow(y)},
\end{equation}
where infimum is taken over all quantile functions of probability distribuitons on $[0, \infty)$.
The problem is equivalent to
\begin{equation}\label{eq:6.5}
\begin{array}{rl}
 {\displaystyle
 \inf_{F^\leftarrow} \sup_{0\leq x\leq 1}} &\;\;\;\;\;
 {\displaystyle \int_0^1 \Big[(1-x^{n-1})\min\big\{1, \tfrac{1-y}{1-x}\big\} +
  x^{n-1}
  (1-y)\Big] \rd F^\leftarrow(y)}
  \\[4mm]
  {\rm s.t.}
  & {\displaystyle \;\;\;\;\;\int_0^1 (1-y^n) \rd F^\leftarrow(y)=1,}
\end{array}
\end{equation}
\par
Let $\mu$ be the right continuous function such that
$\rd \mu (y):=(1-y^n) \rd F^{\leftarrow}(y)$ for all $y\in [0,1]$.
Then
$\mu$ is a probability distribution on $[0,1]$.
Considering the randomized choice of $x\in [0,1]$ according to 
a distribution $\lambda$ on $[0,1]$,
we observe that
the optimal values of (\ref{eq:problem}) and
(\ref{eq:R*}) are equal.  This proves the first statement of the theorem.
\par
As for
the game--theoretic representation
for the difference--type prophet inequality, we observe that
\[
 \sup_{F^\leftarrow} \inf_{0\leq x\leq 1} [M_n(F)-V_n(\tau_{p_x}(\theta_x); F)] =
 \sup_{F^\leftarrow} \inf_{0\leq x\leq 1} \int_0^1 A(x, y) \rd F^\leftarrow (y),
\]
where the supremum is taken over all quantile functions of
distributions on $[0,1]$. This constraint can be written in the form
$F^{\leftarrow}(1)= \int_0^1 \rd F^{\leftarrow}(t) \leq 1$.
Defining  the right--continuous function $\mu$ such that
$\rd \mu(y)= \rd F^{\leftarrow}(y)$ and using the same reasoning as above, we come to (\ref{eq:A*}).
%
\subsection{Proof of Theorem~\ref{th:DN}}
Assume that $F\in \cD_N$, i.e.,
$F(x)=\frac{1}{N}\sum_{i=1}^N I(u_i\leq x)$
with some $0=u_0\leq u_1\leq \cdots \leq u_N$ for $x\geq 0$. For such distributions
the set of all possible thresholds of stopping rules is restricted
to
the $(i/N)$--quantiles $\{u_i, i=0, \ldots, N\}$ of $F$.
Define
$v_0:= u_1$, $v_j:=u_{j+1}-u_{j}$,  $j=1, \ldots, N-1$.
The quantile function $F^{\leftarrow}(u)$ of $F\in \cD_N$
is given by
\[
F^{\leftarrow}(u)= \sum_{j=0}^{N-1} v_j I\{\tfrac{j}{N}<u\leq \tfrac{j+1}{N}\}.
\]
Therefore
it follows from
\eqref{eq:V-x} that the reward of the stopping rule
$\tau_{p_i}(u_i)$ with threshold $u_i$, $i=0, \ldots, N-1$
and randomization probability $p_i:= (F(u_i)-i/N)(F(u_i)-F(u_i-))$
is
\begin{align}
 V_n(\tau_{p_i}(u_i); F)
  =
 \sum_{j=0}^{N-1} \bigg[\Big(1-\big(\tfrac{i}{N}\big)^{n-1}\Big)
 \min\Big\{1, \tfrac{1-\frac{j}{N}}{1-\frac{i}{N}}\Big\} +& \big(\tfrac{i}{N}\big)^{n-1} \big(1-\tfrac{j}{N}\big)\bigg] v_j
 = \sum_{j=0}^{N-1} R(\tfrac{i}{N}, \tfrac{j}{N})v_j,
 \label{eq:V_n(tau_i)}
 \end{align}
 where $R(\cdot, \cdot)$ is given by \eqref{eq:R}.
  It follows from
 \eqref{eq:V_n(tau_i)} that
 \[
  V_n(\tau_{p_i}(u_i); F)= b_i^T v,\;\;\;i=0, \ldots, N-1,
 \]
where $b_i=[b_{i,0};\ldots;b_{i,(N-1)}]$, $i=0, \ldots, N-1$ are vectors with
entries
 \[
 b_{i,j}= \left\{\begin{array}{ll}
1-\big(\frac{i}{N}\big)^{n-1}\frac{j}{N}, & 0 \leq j\leq i\leq N-1,\\[2mm]
\frac{1-(\frac{i}{N})^n}{1-\frac{i}{N}} (1-\frac{j}{N}), & 0 \leq i<j\leq N-1.
                 \end{array}
\right.
\]
Moreover,  for $F\in \cD_N$ we have
\begin{equation*}
 M_n(F) = \bE\max_{1\leq t\leq n} X_t= \sum_{j=0}^{N-1}\big[1-\big(\tfrac{j}{N}\big)^n\big]v_j =: d^T v,
 \end{equation*}
where $d\in \bR^{N}$,
$d_j= 1-\big(\tfrac{j}{N}\big)^n$, $j=0, \ldots, N-1$.
Then  \eqref{eq:6.5} implies that
\begin{align}\label{eq:pAv}
\cR^*_n (\sT_{\rm s,r}; \cD_N)=
\max_\lambda \min_v\big\{ \lambda^TBv: \lambda^T{\bf 1}=1,\;
d^T v=1,\; \lambda\geq 0, \;v\geq 0\big\},
\end{align}
where $B=[b_0^T; b_1^T;\ldots;b_{N-1}^T]$, and $\lambda$ stands for the probability
vector that
defines the single threshold rule: the threshold $u_i$ is selected with the probability $\lambda_i$, $i=0, \ldots, N-1$.
Observing that 
\[
[b_0^T/d_0; b_1^T/d_1;\cdots;b_{N-1}^T/d_{N-1}] = \big\{R(\tfrac{i}{N}, \tfrac{j}{N}): i,j=0, \ldots, N-1\big\}=: \tilde{R}_N
\]
we obtain that
\eqref{eq:pAv} is equivalent to
\begin{equation}\label{eq:R-tilde}
 \cR^*_n (\sT_{\rm s,r}; \cD_N)=
\max_\lambda \min_\mu\big\{ \lambda^T \tilde{R}_N \mu: \lambda^T{\bf 1}=1,\;
\mu^T{\bf 1}=1,\; \lambda\geq 0, \;\mu\geq 0\big\}.
\end{equation}
To complete the proof that $\cR^*_n (\sT_{\rm s,r}; \cD_N)=\cR_{n, N}^*$ we need 
to show that the optimal value of the above game with $N\times N$ matrix $\tilde{R}_N= \big\{R(\tfrac{i}{N}, \tfrac{j}{N}): i,j=0, \ldots, N-1\big\}$ coincides with the optimal
value of the game with $(N-1)\times (N-1)$ matrix 
$R_N= \big\{R(\tfrac{i}{N}, \tfrac{j}{N}): i,j=1, \ldots, N-1\big\}$.
This fact is a consequence of the following dominance relationships between the rows and columns of matrix $\tilde{R}_N$.
\begin{itemize}
 \item[(i).]  The pure $\lambda$--strategy $[1;0;\cdots;0]$ is  
 dominated by the strategy
 $[0;1;0\ldots;0]$.  Indeed,   
 \begin{align*}
 & R(0, \tfrac{j}{N}) = \frac{1-\tfrac{j}{N}}{1-(\tfrac{j}{N})^n}, \;\;\forall j=0, \ldots, N-1,
 \\
 & R(\tfrac{1}{N},0) = R(\tfrac{1}{N}, \tfrac{1}{N})=1,\;\;
  R(\tfrac{1}{N}, \tfrac{j}{N})=\frac{1-(\tfrac{1}{N})^n}{1-\tfrac{1}{N}}
  \frac{1-\tfrac{j}{N}}{1-(\tfrac{j}{N})^n}, \;\;\;\forall j=2, \ldots, N-1
 \end{align*}
so that $R(0,0)=R(\tfrac{1}{N},0)$ and $R(0,\frac{j}{N})< R(1, \tfrac{j}{N})$ for all $j=1, \ldots, N-1$. Thus, the zeroth
row of matrix $\tilde{R}_N$ may be eliminated. 
\item[(ii).] The pure $\mu$--strategy $[1;0;\cdots; 0]$ is dominated by the strategy 
$[0;1;0;\cdots; 0]$. Indeed, 
\begin{align*}
& R(\tfrac{i}{N}, 0) = 1, \;\;\forall i=0, \ldots, N-1
 \\
& R(0, \tfrac{1}{N})=\frac{1-\frac{1}{N}}{1-(\frac{1}{N})^n},\;\;
 R(\tfrac{i}{N}, \tfrac{1}{N})= \frac{1-(\tfrac{i}{N})^{n-1}\tfrac{1}{N}}{1-(\tfrac{1}{N})^{n}},\;\;\forall i=2, \ldots, N-1.
\end{align*}
Thus, $R(\tfrac{i}{N}, 0)>R(\tfrac{i}{N},\tfrac{1}{N})$ for all $i=0,\ldots, N-1$, 
i.e, the zeroth column of matrix $\tilde{R}_N$ can be eliminated.  
\end{itemize}
The facts (i)--(ii) together with 
\eqref{eq:R-tilde} and the definition of $\cR^*_{n, N}$ in \eqref{eq:matrix-R}
imply that $\cR^*_n (\sT_{\rm s,r}; \cD_N)=\cR_{n, N}^*$.
\par
To prove that
$\cA_{n}^*(\sT_{\rm s,r}; \cD_N)= \cA_{n, N}^*$
we note that with the introduced notation for any $F\in \cD_N$
the regret of the stopping rule $\tau_{p_i}(u_i)$ associated  with the threshold
$u_i$ is
\[
\cA_n(\tau_{p_i}(u_i); F)= M_n(F)-V_n(\tau_{p_i}(u_i); F) = d^Tv - b_i^T v,\;\;\;
i=0, \ldots, N-1.
\]
For $F\in \cF_{[0,1]}$ one has $v^T {\bf 1} \leq 1$;
therefore
\begin{align*}
 \cA_n^*(\sT_{\rm s,r}; \cD_N) = &\max_{v} \min_{i=1, \ldots, N-1} \{ d^Tv - b_i^T v: \;v^T {\bf 1}\leq 1,\;v\geq 0\}\\
  &= \max_{v} \min_{\mu}
 \big\{ \mu^T({\bf 1} d^T-B)v : \mu^T{\bf 1}=1,\;
 v^T {\bf 1} = 1,\;v\geq 0, \;\mu\geq 0\big\}.
\end{align*}
Observe that
\[
 {\bf 1}d^T -B = \big\{A(\tfrac{i}{N}, \tfrac{j}{N}): i,j=0, \ldots, N-1\big\}=: \tilde{A}_N.
\]
Then the  statement of the theorem follows from
the fact that, similarly to the above,  
the zeroth row and zeroth column of $\tilde{A}_N$ can be 
eliminated by dominance considerations. 

\subsection{Proof of Theorem~\ref{th:approximation}}
\paragraph{Proof of statement~(i).}
1$^0$. We begin with a simple lemma.
\begin{lemma}\label{lem:A}
One has
\begin{align*}
 |A(x, y) - A(x^\prime, y)| \leq (n-1)|x-x^\prime|,\;\;\forall x,
 x^\prime, y\in [0,1]
 \\
 |A(x, y) - A(x, y^\prime)| \leq (n-1)|y-y^\prime|,\;\;\forall x,
 y, y^\prime \in [0,1].
\end{align*}
\end{lemma}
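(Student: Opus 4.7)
The plan is to bound the partial derivatives of $A(x,y)$ on each of the two regions $\{x>y\}$ and $\{x<y\}$ by $n-1$, and then leverage the continuity of $A$ across the diagonal $\{x=y\}$ (where $A$ vanishes on both sides) to piece together a global Lipschitz bound.

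First, I would compute directly from the piecewise formula \eqref{eq:A}. On $\{x>y\}$, $A(x,y)=y(x^{n-1}-y^{n-1})$ and hence
$\partial_x A=(n-1)yx^{n-2}$ and $\partial_y A=x^{n-1}-ny^{n-1}$.
On $\{x<y\}$, $A(x,y)=(1-y)\sum_{k=1}^{n-1}(y^k-x^k)$ and a short computation gives
$\partial_x A=-(1-y)\sum_{k=1}^{n-1}kx^{k-1}$ and
$\partial_y A=1-ny^{n-1}+\sum_{k=1}^{n-1}x^k$.
For $(x,y)\in[0,1]^2$, the estimate $|\partial_x A|\le n-1$ on $\{x>y\}$ is immediate. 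The non--trivial step is the derivative estimates on $\{x<y\}$; here I will use the algebraic identity
\[
(1-y)\sum_{k=1}^{n-1}ky^{k-1}=\sum_{k=0}^{n-2}y^k-(n-1)y^{n-1}\le n-1,
\]
which follows from telescoping $(1-y)\sum_{k=1}^{n-1}ky^{k-1}=\sum_{k=0}^{n-2}(k+1)y^k-\sum_{k=1}^{n-1}ky^k$. Combined with $x\le y$, this yields $|\partial_x A|\le(1-y)\sum_{k=1}^{n-1}ky^{k-1}\le n-1$. For $\partial_y A$ on $\{x<y\}$, monotonicity in $x$ gives the upper bound $\sum_{k=0}^{n-2}y^k-(n-1)y^{n-1}\le n-1$ and the lower bound $1-ny^{n-1}\ge -(n-1)$. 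On $\{x>y\}$ the bound on $\partial_y A$ uses $x\ge y$: writing $x^{n-1}-ny^{n-1}$ as at most $x^{n-1}\le 1\le n-1$ and at least $-(ny^{n-1}-y^{n-1})=-(n-1)y^{n-1}\ge -(n-1)$.

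With these bounds in hand, the final step is to deduce the Lipschitz estimates. For fixed $y$, if $x,x'$ lie in the same piece (both $>y$ or both $\le y$), the bound $|A(x,y)-A(x',y)|\le(n-1)|x-x'|$ follows from the one--variable mean value theorem applied to $x\mapsto A(x,y)$ on that piece. If $x$ and $x'$ lie on opposite sides of $y$, I insert the value $A(y,y)=0$ (which agrees with both pieces by continuity along the diagonal) and apply the triangle inequality:
\[
|A(x,y)-A(x',y)|\le|A(x,y)-A(y,y)|+|A(y,y)-A(x',y)|\le(n-1)(|x-y|+|y-x'|)=(n-1)|x-x'|.
\]
The argument for the second inequality is entirely symmetric, using the bounds on $\partial_y A$ in place of those on $\partial_x A$.

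The main obstacle is simply verifying the sharp bound $(1-y)\sum_{k=1}^{n-1}ky^{k-1}\le n-1$ in the $\{x<y\}$ region; the naive bound $\sum_{k=1}^{n-1}k=n(n-1)/2$ is far too large, so the telescoping identity above is essential. Everything else reduces to elementary estimates on $[0,1]^2$ together with the piecewise--smooth mean value theorem.
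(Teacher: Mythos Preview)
Your proof is correct and follows the same strategy as the paper: bound $|\partial_x A|$ and $|\partial_y A|$ by $n-1$ on each of the two regions $\{x>y\}$ and $\{x<y\}$, then use continuity across the diagonal to globalize the Lipschitz estimate. The only cosmetic difference is in the key bound for $(1-y)\sum_{k=1}^{n-1}kx^{k-1}$ on $\{x<y\}$: the paper rewrites it as a weighted average of $1,\ldots,n-1$ times the factor $(1-y)\frac{1-x^{n-1}}{1-x}\le\frac{1-y}{1-x}\le 1$, while you first replace $x$ by $y$ via monotonicity and then telescope---both routes give the same $n-1$ bound, and your treatment of the diagonal crossing is slightly more explicit than the paper's.
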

\begin{proof} We note that $A(x,y)$ is continuous on $[0,1]^2$.
We have for any $y\in [0,1]$
\begin{eqnarray*}
 \Big|\frac{\partial A(x,y)}{\partial x}\Big| &=& (n-1)y x^{n-2} \leq n-1,
\;\;\;\; \;\;\;\;\;\forall (x,y): x>y\\
 \Big|\frac{\partial A(x,y)}{\partial x}\Big| &=& (1-y)\sum_{k=1}^{n-1}k x^{k-1}
=(1-y) \frac{\sum_{j=0}^{n-2}(j+1) x^j}{\sum_{j=0}^{n-2} x^j} \sum_{j=0}^{n-2} x^j
\\
&&\;\;\;\;\;\;\;\;
\leq \Big(\frac{1-y}{1-x}\Big) \max_{j=0, \ldots, n-2} \{(j+1)\} \leq n-1,
\;\;\;\;\;\;\forall (x,y): x<y.
 \end{eqnarray*}
Therefore for any fixed $y\in [0,1]$ if $x>y$ and $x^\prime>y$ then
$|A(x, y) - A(x^\prime, y)|\leq (n-1)|x-x^\prime|$,
and the same inequality holds for any fixed $y\in [0,1]$ and $x<y$ and $x^\prime<y$. Thus,  for any $y\in [0,1]$
\[
 |A(x, y) - A(x^\prime, y)|\leq (n-1)|x-x^\prime|,\;\;\;\forall x, x^\prime\in [0,1].
\]
Similarly, for any $x\in [0,1]$
\begin{eqnarray*}
 \Big|\frac{\partial A(x,y)}{\partial y}\Big| &=& |x^{n-1}-n y^{n-1}|\leq n-1,\;\;\;
 \forall (x,y): x>y.
\end{eqnarray*}
For $x<y$ we have
\begin{eqnarray*}
 \frac{\partial A(x,y)}{\partial y} &=& \sum_{k=1}^{n-1} k y^{k-1} -
 \sum_{k=1}^{n-1}(k+1)y^k + \sum_{k=1}^{n-1}x^k \leq
 \sum_{k=1}^{n-1} ky^{k-1} (1-y)
 \\
 &\leq& \frac{\sum_{k=1}^{n-1}ky^{k-1}}{\sum_{k=1}^{n-1} y^{k-1}} (1-y) \sum_{k=1}^{n-1} y^{k-1} \leq
 \max_{j=0,\ldots, n-2}\{(j+2) \}\leq n-1.
\end{eqnarray*}
On the other hand, the above expression implies that
\[
 \frac{\partial A(x,y)}{\partial y} = 1-ny^{n-1} + \sum_{k=1}^{n-1} x^k \geq 1-n.
\]
Thus, $|\partial A(x,y)/\partial y|\leq n-1$ which implies that
\[
 |A(x, y) - A(x, y^\prime)|\leq (n-1) |y-y^\prime|,\;\;\;\;\forall
 (x,y): x<y.
\]
This completes the proof of the lemma.
\end{proof}
\par
2$^0$.
For brevity we write $\cA^*_n=\cA_n^*(\sT_{\rm s,r}; \cF_{[0,1]})$, and recall that
$\cA_n^*= \sup_\mu \inf_\lambda \bar{A}(\lambda, \mu)$,
where $\bar{A}(\lambda, \mu)$ is defined in \eqref{eq:bar-R-A}.
Consider the finite matrix game
(\ref{eq:matrix-A}) associated with $A_N$ whose  value is
$\cA^*_{n, N}$.  Assume that
$\lambda^{N}=(\lambda_1^N, \ldots, \lambda^N_{N-1})$
and $\mu^N=(\mu_1^N, \ldots, \mu_{N-1}^N)$
are  the optimal mixed strategies in this game, i.e.,
$\bar{A}(\lambda^N, \mu^N)=\cA_{n, N}^*$.
\par
Let $\delta_y$ denote the degenerate distribution at $y\in [0,1]$.
First, we note that
$\max_y \bar{A}(\lambda^N, \delta_y) \geq \cA_n^*$.
Therefore, there exists point $y$, say $y_*$, such that
$\bar{A}(\lambda^N, \delta_{y_*}) \geq \cA_n^*$.
By Lemma~\ref{lem:A}, there exists index $j_*\in \{\added{0}, 1, \ldots, N-1\}$ such that
\begin{align*}
 \big|\bar{A}(\lambda^N, \delta_{y_*})- \bar{A}(\lambda^N, \delta_{j_*/N})\big|
 \leq \sum_{i=1}^{N-1}\lambda_i^N |A(\tfrac{i}{N}, y_*) - A(\tfrac{i}{N}, \tfrac{j_*}{N})|
 \leq (n-1) |y_*- \tfrac{j_*}{N}|\leq \frac{n-1}{2N}.
\end{align*}
Therefore
\[
 \cA_n^* \leq \bar{A}(\lambda^N, \delta_{y_*})\leq \bar{A}(\lambda^N, \delta_{j_*/N})+ \frac{n-1}{2N} \leq \cA^*_{n, N}+ \frac{n-1}{2N}.
\]
which yields the upper bound in \eqref{eq:A-2-sided}.
\par
Furthermore, note that
if $\delta_x$ is the degenerate distribution at $x\in [0,1]$ then
$\min_x \bar{A}(\delta_x, \mu^N)\leq \cA_n^*$.
Therefore there exists $x_*$ such that $\bar{A}(\delta_{x_*}, \mu^N)\leq \cA_n^*$.
By Lemma~\ref{lem:A}, there exists index $i_*\in \{1, \ldots, N-1\}$ such that
\[
 \big|\bar{A}(\delta_{x_*}, \mu^N)- \bar{A}(\delta_{i_*/N}, \mu^N)\big|
 \leq \sum_{j=1}^{N-1}\mu_j^N \big|A(x_*,\tfrac{j}{N}) - A(\tfrac{i_*}{N},
 \tfrac{j}{N})\big|\leq (n-1)|x_*-\tfrac{i_*}{N}\big| \leq \frac{n-1}{2N}.
\]
Therefore
\[
 \cA_n^*\geq \bar{A}(\delta_{x_*}, \mu^N) \geq \bar{A}(\delta_{i_*/N},
 \mu^N) - \frac{n-1}{2N} \geq \cA_{n, N}^* -  \frac{n-1}{2N}.
\]
which completes the proof of  the statement~(i).

\paragraph{Proof of statement~(ii).}
1$^0$. We begin with an auxiliary lemma.
\begin{lemma}\label{lem:support}
Let $\cR_n^*:=\inf_\mu \sup_\lambda \bar{R}(\lambda, \mu)$
be the value of the game on the unit square with payoff kernel  $R(x,y)$  (see
\eqref{eq:R} and
\eqref{eq:bar-R-A}), and  let $\lambda_*$ and $\mu_*$ be the optimal strategies.
Let
\begin{equation}\label{eq:c_n}
 c_n:=-\ln \Big\{1-(1-e^{-1})^2+\frac{1}{n-1}\Big\};
\end{equation}
if $n\geq 4$ then the interval $[1- c_n/n, 1]$ does not belong to the support of
$\lambda_*$.
\end{lemma}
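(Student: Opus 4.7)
My plan is to prove the contrapositive: for every $x \in [1-c_n/n, 1]$, the pure strategy $x$ played against $\mu_*$ yields
\[
f(x) := \int_0^1 R(x, y)\, d\mu_*(y) \;<\; \cR_n^*.
\]
By the saddlepoint equilibrium of the infinite game \eqref{eq:R*}, every point of $\mathrm{supp}(\lambda_*)$ satisfies $f(x) = \cR_n^*$, so this strict inequality will exclude the interval from $\mathrm{supp}(\lambda_*)$. As a preliminary, I would first establish $\mu_*(\{1\}) = 0$. Since $R$ is upper semicontinuous and bounded, $f$ is USC. Splitting the integral at $y=x$ and using $R(x,y) = b(x) R(1,y)$ for $y > x$ with $b(x) := 1 + x + \cdots + x^{n-1}$, a direct computation yields the jump relation $\lim_{x \uparrow 1} f(x) - f(1) = (n-1)\mu_*(\{1\})$, and USC at $x = 1$ then forces $\mu_*(\{1\}) \leq 0$, hence $=0$.

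The central ingredient is the algebraic decomposition, valid for all $x \in (0,1)$,
\[
f(x) = f(1) + (b(x) - 1)\int_{y > x} R(1, y)\, d\mu_*(y) + (1 - x^{n-1})\int_{y \leq x} \tfrac{y}{1-y^n}\, d\mu_*(y),
\]
which follows from the identities $R(x,y) - R(1,y) = y(1-x^{n-1})/(1-y^n)$ for $y \leq x$ and $R(x,y) = b(x) R(1,y)$ for $y > x$. Denote the two tail integrals by $B_x$ and $C_x$. I would then combine two saddlepoint constraints to pin down their sizes: applying the identity at the ``good'' strategy $x^* := 1 - 1/n$ together with the lower bound $f(x^*) \geq 1 - (1-1/n)^n$ from Corollary~\ref{cor:1} yields $(b(x^*) - 1) B_{x^*} + (1 - (x^*)^{n-1}) C_{x^*} \geq 1 - (1-1/n)^n - f(1)$, while the saddlepoint at $x = 1$ gives $f(1) \leq \cR_n^*$.

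Suppose for contradiction that $x_0 \in [1-c_n/n, 1]$ lies in $\mathrm{supp}(\lambda_*)$, so that $f(x_0) = \cR_n^*$. Applying the identity at $x_0$ gives $(b(x_0) - 1) B_{x_0} + (1 - x_0^{n-1}) C_{x_0} = \cR_n^* - f(1)$. Since $x^* < x_0$ (using $c_n < 1$ for $n \geq 4$), we have the monotonicities $B_{x^*} \geq B_{x_0}$ and $C_{x^*} \leq C_{x_0}$. Combining these two identities with the universal bound $\cR_n^* \geq 1 - e^{-1}$, also from Corollary~\ref{cor:1}, should yield a lower bound on $1 - x_0^{n-1}$ of the form $1 - x_0^{n-1} \geq (1-e^{-1})^2 - 1/(n-1)$, i.e., $x_0^{n-1} \leq e^{-c_n}$. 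Inverting and comparing with the hypothesis $x_0 \geq 1 - c_n/n$ then yields the contradiction, provided $n \geq 4$.

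The main obstacle is the final algebraic step. The squared factor $(1-e^{-1})^2$ in the definition of $c_n$ is not accidental: it suggests that the argument effectively multiplies one ``prophet-type'' bound of size $1 - e^{-1}$ (coming from $f(1) \leq \cR_n^*$) against another (coming from $f(x^*) \geq 1 - (1-1/n)^n$), while the correction $1/(n-1)$ absorbs the finite-$n$ gap between $(1-1/n)^n$ and $1/e$. Aligning all constants precisely through the monotone comparison of $B_{x^*}$ vs. $B_{x_0}$ and $C_{x^*}$ vs. $C_{x_0}$, and verifying that $n \geq 4$ is exactly the hypothesis required to keep the concluding inequality of the correct sign, is the delicate technical work I would expect to dominate the proof.
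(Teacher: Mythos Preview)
The paper's route is far more direct than yours and uses none of your machinery. It does not decompose $f$, does not analyse $\mu_*(\{1\})$, and does not introduce the tail integrals $B_x,C_x$. Instead, assuming $x_0 = 1 - c/n \in \mathrm{supp}(\lambda_*)$ for some $c \in (0, c_n)$ (so that $\bar R(\delta_{x_0},\mu_*)=\cR_n^*$ by the equalizer property, Karlin's Lemma~2.2.1), the paper picks the \emph{single} test point $y_0 = 1 - 1/n$. Since $x_0 > y_0$, one has the explicit value
\[
R(x_0,y_0)=\frac{1-(1-c/n)^{n-1}(1-1/n)}{1-(1-1/n)^n},
\]
and the elementary estimates $e^{-c}\geq (1-c/n)^n\geq e^{-c}\bigl(1-\tfrac{c^2}{2(n-1)}\bigr)$ yield $R(x_0,y_0)\leq \tfrac{1}{1-e^{-1}}\bigl[1-e^{-c}+\tfrac{1}{n-1}\bigr]$. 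This is $<1-e^{-1}$ precisely when $c<c_n$ and $n\geq 4$. The paper then writes $\cR_n^* = \bar R(\delta_{x_0},\mu_*) \leq R(x_0,y_0) < 1-e^{-1}$, contradicting Corollary~\ref{cor:1}. In particular, the factor $(1-e^{-1})^2$ in $c_n$ does \emph{not} arise, as you conjecture, from multiplying two separate prophet-type bounds; it comes from the single kernel inequality $\tfrac{1-e^{-c}}{1-e^{-1}}<1-e^{-1}$.

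Your algebraic decomposition $f(x)=f(1)+(b(x)-1)B_x+(1-x^{n-1})C_x$ is correct, and so are the inputs $f(x^*)\geq 1-(1-1/n)^n$, $f(1)\leq\cR_n^*$, $f(x_0)=\cR_n^*$. But the step you yourself flag as ``the main obstacle'' is a genuine gap, not mere bookkeeping. The monotonicities $B_{x^*}\geq B_{x_0}$ and $C_{x^*}\leq C_{x_0}$ run \emph{against} the coefficient inequalities $b(x^*)\leq b(x_0)$ and $1-(x^*)^{n-1}\geq 1-x_0^{n-1}$, so combining your two identities only reproduces the known relation $f(x^*)\leq f(x_0)=\cR_n^*$. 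No lower bound on $1-x_0^{n-1}$ follows from these constraints alone: the factor $C_{x_0}$ that multiplies $1-x_0^{n-1}$ can be arbitrarily large when $\mu_*$ places mass near $y=1^-$, while the sum $(b(x_0)-1)B_{x_0}+(1-x_0^{n-1})C_{x_0}=\cR_n^*-f(1)$ is a quantity you have not bounded away from zero. Without an independent estimate on $f(1)$ or on the individual tails, your argument does not close, whereas the paper's one-point evaluation bypasses all of this.
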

\begin{proof}
Let
$x_0=1- c/n$ with for some $c \in (0,c_n)$, and
assume to the contrary that
$x_0\in {\rm supp}(\lambda_*)$.
Under this assumption
we have
$\bar{R}(\delta_{x_0}, \mu^*)=\cR_n^*$; see, e.g., \cite[Lemma~2.2.1]{Karlin}.
Now let $y_0=1-\frac{1}{n}$;  since $x_0>y_0$,
it follows from \eqref{eq:R} that
\begin{align*}
\bar{R}(\delta_{x_0}, \delta_{y_0})= R(x_0, y_0)=
 \frac{1-(1-\frac{c}{n})^{n-1}
 (1-\frac{1}{n})}{1-(1-\frac{1}{n})^n} \leq \frac{1}{1-e^{-1}}
 \bigg[1- \Big(1-\frac{c}{n}\Big)^n + \Big(1-\frac{c}{n}\Big)^n\frac{1-c}{n-c}
 \bigg].
\end{align*}
Because
$e^{-c}\geq (1-\tfrac{c}{n})^n \geq e^{-c}\big(1-\tfrac{c^2}{2(n-1)}\big)$
we obtain
\begin{align*}
\bar{R}(\delta_{x_0}, \delta_{y_0}) \leq
\frac{1}{1-e^{-1}}\bigg[ 1- e^{-c}\Big(1-\frac{c^2}{2(n-1)} - \frac{1}{n-c}
\Big)
\bigg]\leq \frac{1}{1-e^{-1}}\bigg[1-e^{-c} + \frac{e^{-c} (c^2+2)}{2(n-1)}\bigg].
\end{align*}
For $c\in (0,1)$ one has $e^{-c}(c^2+2)\leq 2$; therefore
\[
 \bar{R}(\delta_{x_0}, \delta_{y_0}) \leq \frac{1}{1-e^{-1}}\bigg[1-e^{-c}+
 \frac{1}{n-1}\bigg].
\]
This inequality shows that for any pair of numbers $n$ and $c\in (0,1)$
such that
\begin{equation}\label{eq:cc}
 1-e^{-c}+ \frac{1}{n-1} < (1-e^{-1})^2
\end{equation}
we obtain
\[
 \cR^*_n = \bar{R}(\delta_{x_0}, \mu_*)\leq
 \bar{R}(\delta_{x_0}, \delta_{y_0})= R(x_0, y_0)
 < 1-e^{-1}.
\]
In particular, if $n\geq 4$ and
\[
 c_n= -\ln \Big\{1-(1-e^{-1})^2+\frac{1}{n-1}\Big\}\approx
 -\ln\Big\{0.6004 + \frac{1}{n-1}\Big\}
\]
then (\ref{eq:cc}) holds for all $c\in (0,c_n)$.
This, however,  stands in contradiction to inequality \eqref{eq:cor-11}.
Therefore $x_0\not\in {\rm supp}(\lambda_*)$, and the proof is completed.
\end{proof}
2$^0$. The next result is an analogue of Lemma~\ref{lem:A} for function
$R(x,y)$.
\begin{lemma}\label{lem:R}
 For any   $\epsilon\in (0,1)$ one has
 \begin{eqnarray}
  |R(x, y) - R(x^\prime, y)| &\leq& \frac{n-1}{1-(1-\epsilon)^n}|x-x^\prime|,\;\;\;\;
  \forall x, x^\prime \in [0,1-\epsilon],\;\forall y\in [0,1],
  \label{eq:R1}
  \\
  |R(x, y) - R(x, y^\prime)| &\leq& \frac{n-1}{1-(1-\epsilon)^n}|y-y^\prime|,\;\;\;\;
  \forall x\in [0,1-\epsilon],\;\; \forall y, y^\prime \in [0,1].
 \label{eq:R2}
 \end{eqnarray}
\end{lemma}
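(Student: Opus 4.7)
The plan is to exploit that $R(x,y)$ is continuous on $[0,1-\epsilon]\times[0,1]$ (the only discontinuity of $R$ on $[0,1]^2$ is at $(1,1)$, which is excluded when $\epsilon>0$) and piecewise smooth on the two open regions $\{x>y\}$ and $\{x<y\}$, matching across the diagonal $\{x=y\}$ where $R\equiv 1$. On each of the two regions I would bound the relevant partial derivative by $(n-1)/(1-(1-\epsilon)^n)$; inequalities \eqref{eq:R1}--\eqref{eq:R2} then follow by applying the mean value theorem on the (at most two) sub-segments of the straight path from one argument to the other, combined with continuity of $R$ on $[0,1-\epsilon]\times[0,1]$.

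The bounds on $\partial R/\partial x$ are routine. On $\{x>y\}$ with $x\leq 1-\epsilon$ one also has $y<1-\epsilon$, and direct differentiation of $R=(1-x^{n-1}y)/(1-y^n)$ gives $|\partial R/\partial x|=(n-1)x^{n-2}y/(1-y^n)\leq (n-1)/(1-(1-\epsilon)^n)$. On $\{x<y\}$, using $(1-y)/(1-y^n)=1/\sum_{k=0}^{n-1}y^k$, one has $R(x,y)=\bigl(\sum_{k=0}^{n-1}x^k\bigr)/\bigl(\sum_{k=0}^{n-1}y^k\bigr)$, so $\partial R/\partial x=\bigl(\sum_{k=1}^{n-1}kx^{k-1}\bigr)/\bigl(\sum_{k=0}^{n-1}y^k\bigr)$; since $x\leq y$, $\sum_{k=1}^{n-1}kx^{k-1}\leq\sum_{k=1}^{n-1}ky^{k-1}\leq(n-1)\sum_{k=0}^{n-1}y^k$ (the last step being the polynomial inequality already used in the proof of Lemma~\ref{lem:A}), whence $|\partial R/\partial x|\leq n-1\leq (n-1)/(1-(1-\epsilon)^n)$. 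The same manipulations handle $\partial R/\partial y$ on $\{x<y\}$: $|\partial R/\partial y|=\bigl(\sum x^k\bigr)\bigl(\sum ky^{k-1}\bigr)/\bigl(\sum y^k\bigr)^2\leq\bigl(\sum ky^{k-1}\bigr)/\bigl(\sum y^k\bigr)\leq n-1$, using $\sum x^k\leq\sum y^k$ for $x\leq y$ and the same polynomial inequality.

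The main obstacle, and the only nontrivial estimate, is $\partial R/\partial y$ in the region $\{x>y\}$ (so $y<x\leq 1-\epsilon$). Differentiation yields $\partial R/\partial y=N(x,y)/(1-y^n)^2$ with
\[
N(x,y):=ny^{n-1}-x^{n-1}-(n-1)x^{n-1}y^n,
\]
and the desired bound reduces to showing $|N|\leq (n-1)(1-y^n)$, because then the factor $1-y^n\geq 1-(1-\epsilon)^n$ in the denominator delivers $(n-1)/(1-(1-\epsilon)^n)$. For the upper bound, $y\leq x$ gives $ny^{n-1}\leq nx^{n-1}$, hence $N\leq nx^{n-1}-x^{n-1}(1+(n-1)y^n)=(n-1)x^{n-1}(1-y^n)\leq(n-1)(1-y^n)$. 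For the lower bound, introduce the auxiliary function
\[
g(y):=(n-1)(1-y^n)+N(y)=(n-1)+ny^{n-1}-(n-1)y^n-x^{n-1}\bigl(1+(n-1)y^n\bigr).
\]
A short computation gives $g'(y)=n(n-1)y^{n-2}\bigl[1-(1+x^{n-1})y\bigr]$, so $g$ is non-decreasing on $[0,1/(1+x^{n-1})]$ and non-increasing afterwards; in particular $g$ is unimodal on $[0,x]$, and its minimum on that interval is attained at an endpoint. Since $g(0)=n-1-x^{n-1}\geq 0$ and $g(x)=(n-1)(1-x^n)(1+x^{n-1})\geq 0$, we obtain $g\geq 0$ on $[0,x]$, i.e.\ $-N\leq(n-1)(1-y^n)$. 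This completes the estimate $|N|\leq(n-1)(1-y^n)$ and hence the bound on $|\partial R/\partial y|$ in $\{x>y\}$. Piecing the bounds together as in the first paragraph yields \eqref{eq:R1} and \eqref{eq:R2}.
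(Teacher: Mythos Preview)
Your proof is correct and follows essentially the same strategy as the paper: bound $|\partial R/\partial x|$ and $|\partial R/\partial y|$ separately on the two regions $\{x<y\}$ and $\{x>y\}$, then apply the mean value theorem along the (at most two-piece) straight path. The only difference is your treatment of the lower bound on $\partial R/\partial y$ in $\{x>y\}$: the paper gets this more cheaply by writing $\partial R/\partial y=-\tfrac{x^{n-1}}{1-y^n}+\tfrac{(1-x^{n-1}y)ny^{n-1}}{(1-y^n)^2}$ and noting that the second term is nonnegative, so $\partial R/\partial y\geq -x^{n-1}/(1-y^n)\geq -(n-1)/(1-(1-\epsilon)^n)$; your unimodality argument for $g$ is correct but unnecessary.
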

\begin{proof}  Let $y \in [0,1]$ be fixed. By \eqref{eq:R},
if
$0\leq x < y\leq 1$ then
\begin{align*}
 \frac{\partial R(x,y)}{\partial x}=
 \frac{1-y}{1-y^n} \sum_{j=1}^{n-1} j x^{j-1}= \frac{\sum_{j=0}^{n-2} (j+1)x^j}{\sum_{j=0}^{n-2} x^j}\, \frac{\sum_{j=0}^{n-2} x^j}{\sum_{j=0}^{n-1} y^j}
 \leq \max_{0 \leq j\leq n-2} \{(j+1) \}\leq n-1,
\end{align*}
and for   $0\leq y <x\leq 1-\epsilon $
\begin{align*}
 \Big|\frac{\partial R(x,y)}{\partial x}\Big| = \frac{(n-1)x^{n-2} y}{1-y^n}
 \leq \frac{n-1}{1-(1-\epsilon)^n}.
\end{align*}
In view of the above inequalities, for all $x, x^\prime$ such that $0\leq x<y<x^{\prime}\leq 1-\epsilon$ we have
\begin{align*}
 |R(x, y)- R(x^\prime, y)|\leq |R(x, y) - R(y,y)| + |R(y,y)- R(x^\prime, y)|
 \leq \frac{n-1}{1-(1-\epsilon)^n} |x-x^\prime|.
\end{align*}
The above inequalities imply (\ref{eq:R1}).
\par
Similarly, if $x\in [0,1-\epsilon]$ is fixed, and  $x\leq y\leq 1$ then
\[
 \Big|\frac{\partial R(x,y)}{\partial y}\Big| =\frac{1-x^n}{1-x}
 \frac{\sum_{j=0}^{n-2} (j+1)y^j}{(\sum_{j=0}^{n-1} y^j)^2}
 \leq \frac{\sum_{j=0}^{n-1} x^j}{\sum_{j=0}^{n-1} y^j} \frac{\sum_{j=0}^{n-2} (j+1)y^j}{\sum_{j=0}^{n-1} y^j} \leq
 \max_{0 \leq j\leq n-2} \{(j+1) \}\leq n-1,
\]
and for  $y<x\leq 1-\epsilon$
\begin{align*}
 \frac{\partial R(x,y)}{\partial y} = - \frac{x^{n-1}}{1-y^n} +
 \frac{(1-x^{n-1}y)ny^{n-1}}{(1-y^n)^2}\leq \frac{(n-1)x^{n-1}}{1-y^n} \leq
 \frac{n-1}{1-(1-\epsilon)^n},
\end{align*}
and the same inequality holds for $|\partial R(x,y)/\partial y|$.
Combining these inequalities we obtain (\ref{eq:R2}).
\end{proof}
3$^0$. Now, using Lemmas~\ref{lem:support} and~\ref{lem:R}
we complete the proof of statement~(ii) of Theorem~\ref{th:approximation}.
The proof goes along the lines of the proof of statement~(i), part~2$^0$
with the following minor changes:
the pay-off function $A(x,y)$ is replaced by $R(x,y)$, and the infinite game
is considered on the rectangle $[0, 1- c_n/n]\times [0,1]$.
Lemma~\ref{lem:support} ensures that the optimal values of the game on this set
coincides with the one on the unit square.
\par
Recall that $\cR_n^*=\inf_\mu \sup_\lambda \bar{R}(\lambda, \mu)$, and
$\cR_{n, N}^*= \bar{R}(\lambda^N, \mu^N)$, where
$\lambda^N=(\lambda_1^N, \ldots, \lambda_{N-1}^N)$ and
$\mu^N=(\mu_1^N, \ldots, \mu_{N-1}^N)$ are optimal mixing strategies in the finite matrix game \eqref{eq:matrix-R}.
It follows from Lemma~\ref{lem:support} that
$\lambda^N_i=0$ for $\lceil (1-c_n)N\rceil\leq i\leq N-1$,
where $c_n$ is given in \eqref{eq:c_n}.
The definitions imply that
$\min_y \bar{R}(\lambda^N, \delta_y)\leq \cR_n^*$, i.e., there exists
$y_*\in [0,1]$ such that $\bar{R}(\lambda^N, \delta_{y_*})\leq \cR_n^*$.
By Lemma~\ref{lem:R} applied with $\epsilon=c_n/n$
for some $j_*$
\[
 |\bar{R}(\lambda^N, \delta_{y_*}) - \bar{R}(\lambda^N, \delta_{j_*/N})|
 \leq \frac{n-1}{1-(1- \frac{c_n}{n})^n} |y_*-\tfrac{j_*}{N}| \leq
 \frac{n-1}{2N\big(1-(1- \frac{c_n}{n})^n\big)}
\]
Therefore
\[
 \cR_n^* \geq \bar{R}(\lambda^N, \delta_{y_*})\geq
 \bar{R}(\lambda^N, \delta_{j_*/n}) - \frac{n-1}{2N\big(1-(1- \frac{c_n}{n})^n\big)} \geq
 \cR_{n, N}^* - \frac{n-1}{2N\big(1-(1- \frac{c_n}{n})^n\big)}.
\]
Then the lower bound in \eqref{eq:R-2-sided} follows by substitution of
\eqref{eq:c_n}. The upper bound on $\cR_n^*$ is proved similarly.

\subsubsection*{Acknowledgment}
The research is supported by the BSF grant 2020063.

\bibliographystyle{agsm}

\end{document}